\documentclass[11pt]{amsart}
\usepackage{amssymb, amsthm, amsmath, slashed, wasysym, epstopdf, graphicx}
\usepackage{bbm}
\usepackage{mathrsfs}
\usepackage{color}

\newtheorem{theorem}{Theorem}
\newtheorem{lemma}{Lemma}

\newtheorem{proposition}{Proposition}

\theoremstyle{definition}
\newtheorem{definition}{Definition}

\theoremstyle{definition}
\newtheorem{remark}{Remark}

\theoremstyle{definition}
\newtheorem{example}{Example}

\newcommand{ \rn }[1]{\mathbb{R}^{#1}}

\title[The Broken Ray Transform in $n$ Dimensions]{The Broken Ray Transform in $n$ Dimensions with Flat Reflecting Boundary}

\author[M. Hubenthal]{Mark Hubenthal}
\address{Department of Mathematics, 641 PGH, University of Houston}
\email{hubenjm@math.uh.edu}
\subjclass[2010]{45Q05, 47G30, 53C65}
\keywords{inverse problems, integral geometry, microlocal analysis}

\begin{document}
\maketitle

\begin{abstract}
We study the broken ray transform on $n$-dimensional Euclidean domains where the reflecting parts of the boundary are flat and establish injectivity and stability under certain conditions. Given a subset $E$ of the boundary $\partial \Omega$ such that $\partial \Omega \setminus E$ is itself flat (contained in a union of hyperplanes), we measure the attenuation of all broken rays starting and ending at $E$ with the standard optical reflection rule applied to $\partial \Omega \setminus E$. By localizing the measurement operator around broken rays which reflect off a fixed sequence of flat hyperplanes, we can apply the analytic microlocal approach of Frigyik, Stefanov, and Uhlmann (\cite{xraygeneric}) for the ordinary ray transform by means of a local path unfolding. This generalizes the author's previous result in \cite{hubenthal2}, although we can no longer treat reflections from corner points. Similar to the result for the two dimensional square, we show that the normal operator is a classical pseudo differential operator of order $-1$ plus a smoothing term with $C_{0}^{\infty}$ Schwartz kernel.
\end{abstract}

\section{Introduction \label{sec:intro}}
In this work, we focus on a particular variant of the attenuated x-ray transform which adds another layer of complexity by incorporating billiard trajectories into the problem. The goal will be to establish injectivity and stability results for such a transform analogous to those already existing for the much more familiar x-ray transform. Recall that the x-ray transform and its generalizations have long received attention from mathematicians, partly due to its utility in tackling other inverse problems, and partly for its own geometrical interest. The standard x-ray transform of a function $f$ defined on $\rn{n}$ can be written as
\begin{equation*}
Xf(\gamma) = \int f(\gamma(t))\,dt, \quad \gamma \in \mathcal{F}
\end{equation*}
where $\mathcal{F}$ is the collection of all lines in $\rn{n}$. There are various inversion formulas known in the Euclidean setting, many of which involving the Hilbert transform. However, the one most relevant to the approach used in this paper is the following:
\begin{equation*}
f = c_{n}(-\Delta)^{1/2}X^{*}Xf, \quad \forall f \in \mathcal{E}'(\rn{n}).
\end{equation*}
Here $X^{*}$ is the adjoint to $X$, and $c_{n}$ is a constant depending on the dimension and $\Delta$ is the Laplacian, see \cite{nattererbook}.

Perhaps more relevant to the microlocal approach of this paper, it is well-known that
\begin{equation*}
X^{*}Xf(x) \simeq f  * \frac{1}{|x|^{n-1}} = (2\pi)^{-n} \int e^{i(x-y)\cdot \xi} f(y) |\xi|^{-1}\,dy\,d\xi.
\end{equation*}
This means $X^{*}X$ is a pseudodifferential operator of order $-1$ (its symbol is $|\xi|^{-1}$) that is elliptic on $\rn{n}$. 

A bit more difficult to work with is the attenuated or weighted x-ray transform given by
\begin{equation*}
X_{w}f(\gamma) = \int w(\gamma(t),\gamma'(t))f(\gamma(t))\,dt.
\end{equation*}
In many cases, as is the case in this paper, the weight $w$ is an exponential function induced by an attenuation $\sigma$. In \cite{novikov,novikov2}, Novikov presents an inversion formula for the attenuated x-ray transform in $2$ dimensions when $\sigma$ is isotropic and then derives specific range conditions. \cite{natterer} soon after showed an  inversion formula for the attenuated Radon transform which is equivalent to Novikov's in the $2$-dimensional case. \cite{boman} also presents an inversion formula for the attenuated Radon transform using a different approach. Bal later derived in \cite{balradon} a specific reconstruction scheme based on the inversion formula of Novikov, which exploits some redundancies in the data and even considers the case of an angularly varying source. We refer the reader also to \cite{finch1,finch2, quinto,quinto2} for more background on the x-ray transform.

It should also be mentioned that much work has been done with the x-ray transform on manifolds (in particular, Frigyik, Stefanov, and Uhlmann in \cite{xraygeneric}). One can also consider the transform applied to tensors which has been treated extensively in \cite{xraytensor,stefanov1,stefanovrio}, typically in the Riemannian case where the relevant family of curves consists of geodesics. More recently, some work has been done in \cite{causticxray} for the geodesic x-ray transform in the presence of fold caustics in the metric $g$. Also a recent result of Uhlmann and Vasy proved injectivity of the local geodesic x-ray transform in \cite{vasy} assuming a convexity condition on the boundary.

Perhaps the main motivation for studying the modified x-ray transform presented in this paper is the recent work of Kenig and Salo, \cite{salokenig} on the anisotropic Calder\'on problem with partial data. The authors' approach in that work led to a variation of the x-ray transform, which we call the broken ray transform. As a simple example, consider the unit square $\Omega = (0,1)^{2} \subset \rn{2}$, and let $E \subset \partial \Omega$ be the left edge $\{0\} \times [0,1]$. Then for each $(x,\theta) \in E \times \mathbb{S}^{1}$ such that $\theta \cdot e_{1} > 0$ (i.e. $\theta$ points inward), we let $\gamma_{x,\theta}$ be the piecewise linear curve starting at $x$ with initial direction $\theta$ and ending at the next intersection with $E$, such that whenever $\gamma_{x,\theta}$ intersects $\partial \Omega \setminus E$ its direction changes according to the standard rule of billiards. We then measure the integral of the unknown function $f$ over all such \textit{broken rays} $\gamma_{x,\theta}$. See Figure \ref{fig:brtscheme}.

We should also mention the work of Eskin in \cite{eskin}, which considered the Schrodinger equation with electric and magnetic potentials on a domain $\Omega$ in $\rn{2}$ with finitely many internal, convex obstacles. From knowledge of the Dirichlet to Neumann map at the boundary, one can recover the integrals of either potential along broken rays starting and ending on $\partial \Omega$ with reflections occurring on the boundaries of the internal obstacles inside. One key assumption however, is that there cannot be any trapped broken rays. In order to ensure this, the author adds corners to the interior obstacles' boundaries as necessary. From these assumptions, Eskin shows that one can uniquely recover the smooth electric and magnetic potentials from such integrals.

\begin{figure}
\centering
\def \svgwidth{0.4\columnwidth}
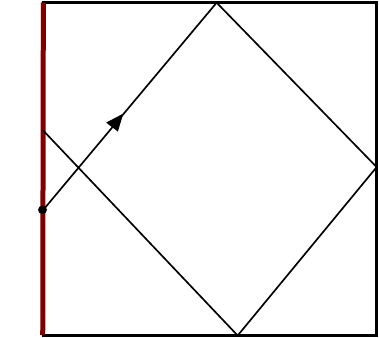
\caption{A broken ray on the square. \label{fig:brtscheme}}
\end{figure}

In \cite{hubenthal2} the author utilized a reflection approach to deduce injectivity and stability results for such a transform (possibly with a non constant weight) on the unit square analogous to those in \cite{xraygeneric}. A similar reflection approach is utilized in \cite{joonas2} which considers the geodesic broken ray transform on a particular class of Riemannian manifolds where the reflecting subset of the boundary is in the shape of a flat cone. Ilmavirta also recently proved in \cite{joonas} an injectivity result for the broken ray transform on the open disk $\mathbb{D}$ assuming the unknown function $f$ is uniformly quasianalytic in the angular variable (when written in polar coordinates). However, such an approach does not yield a stability estimate. At the time of this writing, it remains an open problem to determine whether the broken ray transform is injective on $L^{2}(\mathbb{D})$ for particular measurement subsets $E$ (e.g. if $E$ is an open arc).

The approach we use here will be similar to that used in \cite{hubenthal2}. In particular, we exploit the flatness of the reflecting boundaries to be able to unfold all broken rays in some neighborhood of a fixed broken ray within a certain augmented domain we will specifically construct. The problem then becomes a standard x-ray transform on the augmented domain. What follows is largely based on the microlocal analytic techniques of \cite{xraygeneric}. The key differences of this work and previous work on the broken ray transform in \cite{hubenthal2} however, is that here we make a more elegant change of variables in order to simplify the normal operator that is easily generalized to all dimensions. Furthermore, we cover a broader range of domains than those allowed in \cite{joonas} because we do not require there to be one unique augmented domain that applies for all broken rays. The augmentation of the domain $\Omega$ via the unfolding of a neighborhood of some fixed broken ray is dependent on the particular broken ray considered. In this sense, the approach here is more local.

The structure of this paper goes as follows. In \S \ref{sec:mainresults} we describe the problem and notations and then state the main results. \S \ref{sec:injective} applies the microlocal ideas of \cite{xraygeneric} and adapts the path unfolding technique of \cite{hubenthal2} to show the recovery of the analytic wavefront set of the unknown function $f$ when the corresponding weight function on the unfolded domain is analytic. This is always the case when the attenuation is identically $0$, but such an assumption is not necessary in general. Injectivity is then established under certain conditions on the available broken rays. In \S \ref{sec:stability} we then show the details required to prove the stability estimate of the inverse problem. Specifically \S \ref{sec:normalop}-\ref{sec:changevar} details how the normal operator decomposes into a pseudodifferential operator of order $-1$ plus a smoothing term. Finally, we extend the stability estimate and injectivity to $C^{2}$ perturbations of the attenuation $\sigma$ in \S \ref{sec:reducingsmoothness}.

\section{Statement of Main Results \label{sec:mainresults}}
Let $\Omega \subset \rn{n}$ be a convex domain with smooth boundary. Define $\Gamma_{-} = \{(x,\theta) \in \Omega \times \mathbb{S}^{n-1} \, | \, x \in \partial \Omega, \, \nu(x) \cdot \theta < 0\}$ as the set of ingoing unit vectors on $\partial \Omega$. Here $\nu(x)$ is the outward unit normal vector to $\partial \Omega$. We also define, for a general subset $E \subset \partial \Omega$,
\begin{equation}
\Gamma_{-}(E) := \{(x,\theta) \in \Omega \times \mathbb{S}^{n-1} \, | \, x \in E,  \, \nu(x) \cdot \theta  < 0\}.
\end{equation}
Typically, one might have $E$ to be an open set, but it is not important. Throughout this work, we will assume that $\partial \Omega \setminus E$ is contained in a union of hyperplanes (i.e. each component is flat).

We will call any unit speed curve $\gamma_{x,\theta}$ a \textit{broken ray} in $\Omega$ if
\begin{enumerate}
\item[(a)] $(\gamma_{x,\theta}(0), \dot{\gamma}_{x,\theta}(0)) = (x,\theta) \in \Gamma_{-}(E)$,\\
\item[(b)] it consists of finitely many line segments $\gamma_{x,\theta,1}, \gamma_{x,\theta,2}, \ldots, \gamma_{x,\theta,N}$ before hitting $E$ again,\\
\item[(c)] it obeys the geometrical optics reflection law whenever intersecting $\partial \Omega \setminus E$:
\begin{equation}
\dot{\gamma}_{x,\theta,j+1}(0) = \dot{\gamma}_{x,\theta,j}(L_{j}) - 2 \left( \nu(\gamma_{x,\theta,j}(L_{j})) \cdot \dot{\gamma}_{x,\theta,j}(L_{j}) \right)\nu(\gamma_{x,\theta,j}(L_{j})), \,1 \leq j \leq N.
\end{equation}
\end{enumerate}
Here $L_{j}$ denotes the length of $\gamma_{x,\theta,j}$. 

Let us establish some important notation. We use $T:\Gamma_{-} \to \Gamma_{-}$ to denote the billiard map taking a vector $(x,\theta) \in \Gamma_{-}$ to $(x',\theta') \in \Gamma_{-}$, where $x'$ is the intersection point of the line $\gamma_{x,\theta}$ with $\partial \Omega$, and $\theta'$ is the reflected direction. Given $(x,\xi) \in \Omega \times \mathbb{S}^{n-1}$, let $\tau_{\pm}(x,\theta) = \min \{ t > 0 \, | \, x \pm t\theta \in \partial \Omega\}$ and define the diameter through $x$ in the direction $\theta$ by
\begin{equation*}
\tau(x,\theta) := \tau_{-}(x,\theta) + \tau_{+}(x,\theta).
\end{equation*}
We use $\pi_{1}:\rn{n} \times \mathbb{S}^{n-1} \to \rn{n}$ and $\pi_{2}: \rn{n} \times \mathbb{S}^{n-1} \to \mathbb{S}^{n-1}$ to denote the standard projection operators onto $\rn{n}$ and $\mathbb{S}^{n-1}$, respectively.

We define the function $M(x,\theta)$ for $(x,\theta) \in \Gamma_{-}(E)$ as the number of reflections of the broken ray $\gamma_{x,\theta}$ before returning to $E$. The area form on $\Gamma_{-}$ is given by $d\Sigma = |\nu(x) \cdot \theta| \, dS(x)\, d\theta$, where $dS(x)$ is the surface measure on $\partial \Omega$. From \cite{sergebook} we have that $T$ preserves $d\Sigma$, even for domains with piecewise smooth boundary. That is, $T^{*}(d\Sigma) = d\Sigma$, where $T^{*}$ denotes the pullback of $T$.

Assume that the attenuation $\sigma$ satisfies $\sigma \in C^{\infty}(\Omega \times \mathbb{S}^{n-1})$. The broken ray transform with respect to $E\subset \partial \Omega$ and with attenuation $\sigma$, denoted by $I_{\sigma, E}:L^{2}(\Omega) \to L^{2}(\Gamma_{-}, d\Sigma)$, is defined as
\begin{align}
& I_{\sigma, E}f(x,\theta) \notag \\
& := \sum_{j=0}^{M(x,\theta)}\int_{\rn{+}} \exp\left( - \sum_{m=0}^{j-1}\int_{\rn{+}}\sigma(\pi_{1} \circ T^{m}(x, \theta) + \tau \pi_{2}\circ T^{m}(x, \theta), \pi_{2} \circ T^{m}(x, \theta))\,d\tau\right) \notag\\
& \qquad \cdot \exp\left(-\int_{\rn{+}}\sigma(\pi_{1} \circ T^{j}(x,\theta) + (t-\tau)\pi_{2} \circ T^{j}(x,\theta), \pi_{2} \circ T^{j}(x,\theta))\,d\tau\right)\notag\\
& \qquad \cdot  f(\pi_{1} \circ T^{j}(x,\theta) + t\pi_{2} \circ T^{j}(x,\theta))\,dt. \label{eq:forwardoperator}\\
& = \sum_{j=0}^{M(x,\theta)}\int_{\rn{+}} \left[w_{j}f\right](\pi_{1} \circ T^{j}(x,\theta) + t\pi_{2} \circ T^{j}(x,\theta), \, \pi_{2} \circ T^{j}(x,\theta))\,dt. \notag
\end{align}
for all regular broken rays $\gamma_{x,\theta}$. The weight functions $w_{j}$ on $\Omega \times \mathbb{S}^{n-1}$ are given by
\begin{align}
w_{j}(y,\eta) & = \exp\left( -\sum_{m=0}^{j-1} \int_{\mathbb{R}_{+}}\sigma( z_{m-j} + \tau \theta_{m-j}, \theta_{m-j})\,d\tau \right)\exp\left( -\int_{\mathbb{R}_{+}}\sigma(y - \tau \eta, \eta)\,d\tau \right)\notag\\
& = \exp\left( -\sum_{m=1}^{j} \int_{\mathbb{R}_{+}}\sigma( z_{-m}+ \tau \theta_{-m}, \, \theta_{-m})\,d\tau\right) \exp\left( -\int_{\mathbb{R}_{+}}\sigma(y - \tau \eta, \, \eta)\,d\tau \right).\notag
\end{align}
In the above definition, for convenience of notation we also extend all functions outside of $\Omega$ by zero.

Note that typically, $M(x,\theta)$ is piecewise constant and intuitively it will have jumps near broken rays that intersect $\partial E$. In two dimensions, one cannot ignore broken rays that intersect $\partial E$ and still obtain injectivity for functions supported on some subset of $\Omega$ (see \cite{hubenthal2}). However, if $\mathrm{supp}(f)$ is known to be contained in some compact subset of $\Omega$, then sometimes the extra broken ray segment introduced by a jump in $M(x,\theta)$ (or segment removed, respectively) might not intersect $\mathrm{supp}(f)$, which then implies that the broken ray transform does not introduce a singularity independent of $f$.  in higher dimensions, we can usually ignore such broken rays that intersect $\partial E$ since we have many more covectors to choose from at a given point in order to detect a singularity in a particular direction. We thus have the following definition of the particular broken rays we would like to restrict ourselves to:
\begin{definition}Let $E \subset \partial \Omega$ and $K \Subset \Omega$. We say that $\gamma_{x,\theta}$ is a \textit{regular broken ray} for $(x,\theta) \in \Gamma_{-}(E)$ with respect to $E$ and $K$ if there exists some smooth cutoff function $\alpha \in C_{0}^{\infty}(\Gamma_{-})$ with $\alpha(x,\theta) = 1$ such that $\alpha I_{\sigma, E}f \in C^{\infty}(\Gamma_{-})$ for all $f \in C^{\infty}(K)$.
\end{definition}

\begin{remark}A sufficient condition to be a regular broken ray is that $\gamma_{x,\theta}$ never touches a boundary point of $E$. However, depending on $E$ and $K$, it is possible to have broken rays touching boundary points which are still regular. In particular, for the square in two dimensions, if $E = \{0\} \times [0,1] \cup [0,1] \times \{0\} \cup [0,\epsilon) \times \{1\} \cup \{1\} \times [0,\epsilon)$ where $\epsilon = \frac{1}{2} \mathrm{dist}(K, \partial \Omega)$, then any broken ray that intersects $\partial \Omega$ near $(\epsilon, 1)$ either: (i) terminates (if the intersection point is in $E$); (ii) the next reflected segment or the current one is disjoint from $K$.  Note also that in this example every broken ray has at most $2$ reflections. Moreover, there is effectively only one reflection to consider for broken rays that pass near $\partial E$. It is important that the reflected segment disjoint from $K$ be either at the end of the beginning of the broken ray, because the weight function may be constant $1$. \end{remark}
Similar to Theorem (b) of \cite{inversesource}, we want to extend $\alpha I_{\sigma, E}$ to be well-defined on the space $L^{2}(\Omega)$ for any $\alpha \in C_{0}^{\infty}(\Gamma_{-})$ which limits the number of reflections, and also to be bounded. To this end, we need only show that the image is a well-defined $L^{2}$ function. Even though the proof is also given in Lemma 1 of \cite{hubenthal2}, we restate it here for convenience. Later we also see that it is possible to extend $\alpha I_{\sigma, E}$ to be defined on the space of compactly supported distributions on $\Omega$, $\mathcal{E}'(\Omega)$.
\begin{lemma}Let $\alpha \in C_{0}^{\infty}(\Gamma_{-})$ and suppose all broken rays in $\mathrm{supp}(\alpha)$ are regular and have at most $M_{max} \in \mathbb{N}$ reflections. Then $\alpha I_{\sigma, E}$ extends to a bounded operator from $L^{2}(\Omega \times \mathbb{S}^{n-1}) \to L^{2}(\Gamma_{-}, d\Sigma)$. \label{lemma:raybounded}\end{lemma}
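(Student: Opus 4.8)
The plan is to split $\alpha I_{\sigma,E}$ into a finite sum of single-segment weighted transforms, bound each one using the measure-preserving property of the billiard map together with the classical $L^{2}$ bound for the x-ray transform, and then sum. It suffices to prove the estimate for $f\in C_{0}^{\infty}(\Omega)$ (extended by zero) and invoke density of $C_{0}^{\infty}(\Omega)$ in $L^{2}(\Omega)$.

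First I would observe that, since every broken ray in $\mathrm{supp}(\alpha)$ has at most $N_{max}$ reflections, only the terms $j\le N_{max}$ in \eqref{eq:forwardoperator} contribute, so $\alpha I_{\sigma,E}f=\sum_{j=0}^{N_{max}}\alpha\,g_{j}$, where, writing $T^{j}(x,\theta)=(x_{j},\theta_{j})\in\Gamma_{-}$,
\[
g_{j}(x,\theta)=\mathbbm{1}_{\{N(x,\theta)\ge j\}}\int_{\mathbb{R}_{+}}[w_{j}f](x_{j}+t\theta_{j},\theta_{j})\,dt.
\]
The weight factors $w_{j}$ are bounded on $\Omega\times\mathbb{S}^{n-1}$ uniformly in $j\le N_{max}$: each is a product of at most $N_{max}+1$ exponentials of integrals of the bounded function $\sigma$ over chords of the bounded domain $\Omega$, so $\sup_{j\le N_{max}}\|w_{j}\|_{L^{\infty}}\le e^{(N_{max}+1)\|\sigma\|_{\infty}\operatorname{diam}\Omega}=:C_{0}$. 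Hence, since $f$ vanishes outside $\Omega$ and $\theta_{j}$ is ingoing at $x_{j}\in\partial\Omega$,
\[
|g_{j}(x,\theta)|\le C_{0}\int_{0}^{\tau_{+}(x_{j},\theta_{j})}|f|(x_{j}+t\theta_{j})\,dt=C_{0}\,(X_{+}|f|)(T^{j}(x,\theta)),
\]
where $X_{+}h(y,\eta):=\int_{0}^{\tau_{+}(y,\eta)}h(y+t\eta)\,dt$ for $(y,\eta)\in\Gamma_{-}$.

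Next I would invoke two elementary facts. By the result of \cite{serge} cited above, $T$ — hence each iterate $T^{j}$ — is an (a.e.) measure-preserving bijection of $(\Gamma_{-},d\Sigma)$, so $\int_{\Gamma_{-}}F\circ T^{j}\,d\Sigma=\int_{\Gamma_{-}}F\,d\Sigma$ for every nonnegative measurable $F$. Moreover $X_{+}$ is bounded $L^{2}(\Omega)\to L^{2}(\Gamma_{-},d\Sigma)$: Cauchy--Schwarz gives $(X_{+}|f|)(y,\eta)^{2}\le\operatorname{diam}(\Omega)\int_{0}^{\tau_{+}(y,\eta)}|f|^{2}(y+t\eta)\,dt$, and integrating against $d\Sigma=|\nu(y)\cdot\eta|\,dS(y)\,d\eta$ and, for each fixed $\eta$, slicing $\Omega$ into chords parallel to $\eta$ (the map $(y,t)\mapsto y+t\eta$, $y\in\partial\Omega\cap\{\nu\cdot\eta<0\}$, $0<t<\tau_{+}$, has Jacobian $|\nu(y)\cdot\eta|$) yields $\|X_{+}|f|\|_{L^{2}(\Gamma_{-},d\Sigma)}^{2}\le\operatorname{diam}(\Omega)\,|\mathbb{S}^{n-1}|\,\|f\|_{L^{2}(\Omega)}^{2}$. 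Combining, for each $j$,
\[
\|\alpha g_{j}\|_{L^{2}(\Gamma_{-},d\Sigma)}\le C_{0}\|\alpha\|_{\infty}\big(\operatorname{diam}(\Omega)\,|\mathbb{S}^{n-1}|\big)^{1/2}\|f\|_{L^{2}(\Omega)},
\]
and the triangle inequality over the $N_{max}+1$ terms gives $\|\alpha I_{\sigma,E}f\|_{L^{2}(\Gamma_{-},d\Sigma)}\le(N_{max}+1)C_{0}\|\alpha\|_{\infty}(\operatorname{diam}(\Omega)\,|\mathbb{S}^{n-1}|)^{1/2}\|f\|_{L^{2}(\Omega)}$, which extends $\alpha I_{\sigma,E}$ to a bounded operator on $L^{2}(\Omega)$.

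All the computations here are routine; the only point that needs care is that $T^{j}$ genuinely is a measure-preserving bijection on the relevant part of $\Gamma_{-}$ and that the sum really terminates. This is exactly where the hypothesis that the broken rays in $\mathrm{supp}(\alpha)$ are regular and have at most $N_{max}$ reflections enters: it guarantees that the degenerate trajectories (grazing $\partial\Omega$, hitting $\partial E$, or passing through corners of $\partial\Omega$) form a $d\Sigma$-null subset of $\mathrm{supp}(\alpha)$, so the change of variables $u=T^{j}(x,\theta)$ is legitimate off a null set and the finite decomposition is exact. I expect this bookkeeping to be the only (mild) obstacle.
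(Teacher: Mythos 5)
Your proof is correct and follows essentially the same route as the paper's: split into at most $N_{max}+1$ single-segment terms, use Cauchy--Schwarz along each chord to extract a $\mathrm{diam}(\Omega)$ factor, pull back by $T^{j}$ using the $d\Sigma$-invariance of the billiard map, and conclude with the Santal\'o-type identity $\int_{\Gamma_{-}}\int_{\mathbb{R}_{+}}h(x+t\theta,\theta)\,dt\,d\Sigma=\int_{\Omega\times\mathbb{S}^{n-1}}h\,dx\,d\theta$ (which the paper cites from the literature and you re-derive). The only cosmetic differences are that you use the triangle inequality over the sum where the paper uses Cauchy--Schwarz, and you bound the weights $w_{j}$ by an explicit exponential constant where the paper simply drops them.
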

\begin{proof}
First we recall an identity from \cite{inversesource} which asserts that for any function $f \in L^{2}(\Omega \times \mathbb{S}^{n-1})$, we have
\begin{equation*}
\int_{\Gamma_{-}}\int_{\mathbb{R}_{+}} f(x+t\theta, \theta)\,dt\,d\Sigma = \int_{\Omega \times \mathbb{S}^{n-1}} f(x,\theta)\,dx\,d\theta.
\end{equation*}
Now observe that
\begin{align*}
& \| \alpha I_{\sigma, E} f(x,\theta) \|_{L^{2}(\Gamma_{-})}^{2} \\
& \leq 2 \int_{\Gamma_{-}} |\alpha(x,\theta)|^{2}\sum_{j=0}^{M(x,\theta)}\left| \int_{\mathbb{R}_{+}} [w_{j} f](z_{j}(x,\theta) + t\theta_{j}(x,\theta), \theta_{j}(x,\theta))\,dt \right|^{2} \, d\Sigma\\
&\leq 2 \int_{\Gamma_{-}} \sum_{j=0}^{M_{max}}\left| \int_{\mathbb{R}_{+}} \chi_{[0,\tau_{+}(z_{j},\theta_{j})]}(t)[w_{j} f](z_{j}(x,\theta) + t\theta_{j}(x,\theta), \theta_{j}(x,\theta))\,dt \right|^{2} \, d\Sigma\\
& \leq 2 \int_{\Gamma_{-}} \sum_{j=0}^{M_{max}} \|\chi_{[0,\tau_{+}(z_{j},\theta_{j})]}\|_{L^{2}(\mathbb{R}_{+})}^{2}\|[w_{j}f](z_{j}+t\theta_{j},\theta_{j})\|_{L^{2}(\mathbb{R}_{+})}^{2}\,d\Sigma\\
& \leq 2 \int_{\Gamma_{-}} \sum_{j=0}^{M_{max}} \mathrm{diam}(\Omega)\|f(z_{j}+t\theta_{j},\theta_{j})\|_{L^{2}(\mathbb{R}_{+})}^{2}\,d\Sigma\\
& = 2 \int_{\Gamma_{-}} \sum_{j=0}^{M_{max}} \mathrm{diam}(\Omega)\int_{\mathbb{R}_{+}}|f(z_{j}+t\theta_{j},\theta_{j})|^{2}\,dt\,d\Sigma\\
& = 2 (M_{max}+1)\mathrm{diam}(\Omega)\int_{\Gamma_{-}} \int_{\mathbb{R}_{+}}|f(x+t\theta,\theta)|^{2}\,dt\,d\Sigma\\
& = 2 (M_{max}+1)\mathrm{diam}(\Omega)\|f\|_{L^{2}(\Omega \times \mathbb{S}^{n-1})}^{2}.
\end{align*}
\end{proof}

We can then think of $I_{\sigma, E}$ locally as an x-ray transform defined on a larger space obtained via reflection across a given sequence of hyperplanes defining $\partial \Omega \setminus E$ which corresponds to the sequence of reflection faces for a given broken ray $\gamma_{x_{0},\theta_{0}}$. Let $\alpha$ be a smooth cutoff function on $\Gamma_{-}$ which is equal to $1$ near $(x_{0},\theta_{0}) \in \Gamma_{-}(E)$ with $\gamma_{x_{0},\theta_{0}}$ a regular broken ray, and such that all broken rays in its support reflect on the same sequence of hyperplanes, denoted by $\{P_{1},\ldots, P_{N}\}$, where $N = N(x_{0}, \theta_{0})$ is the number of reflections of $\gamma_{x_{0},\theta_{0}}$. Representing each affine hyperplane $P_{j}$ by a pair $(a_{j}, \xi_{j}) \in \rn{n} \times \mathbb{S}^{n-1}$, where $a_{j} \in P_{j}$ and $\xi_{j}$ is a unit normal vector of $P_{j}$, we consider the operator of reflection across $P_{j}$ given by
\begin{equation}
R_{j}(x) = x + 2\xi_{j} (a_{j}-x)\cdot \xi_{j}.
\end{equation}

Now we can define an unfolded version $\widetilde{\gamma}_{x,\theta}$ of $\gamma_{x,\theta}$ as follows: if $\gamma_{x,\theta}$ consists of a collection of segments $\{\gamma_{x,\theta,0}, \ldots, \gamma_{x,\theta,N}\}$, then
\begin{equation}
\widetilde{\gamma}_{x,\theta} = \bigcup_{j=0}^{N}R_{0}\circ R_{1} \circ \cdots \circ R_{j}(\gamma_{x,\theta,j})
\end{equation}
Geometrically, it is easy to see since each component of $\partial \Omega \setminus E$ is flat that $\widetilde{\gamma}_{x,\theta}$ is a straight line segment in $\rn{n}$.

We then construct a domain $\widetilde{\Omega}$ which resembles a beam containing the unfolded broken ray $\widetilde{\gamma}_{x_{0},\theta_{0}}$. First we define the set
\begin{equation}
\Omega_{0} := \{ z \in \rn{n} \, | \,  z = \gamma_{x,\theta,0}(t), \, 0 \leq t < \tau_{+}(x,\theta), \, (x,\theta) \in \mathrm{supp}(\alpha)\}.
\end{equation}
Then for $1 \leq j \leq N$ we define
\begin{align}
\Omega_{j} & := R_{1} \circ R_{2} \circ \cdots \circ R_{j}\Big( \{ z \in \rn{n} \, | \, x = \gamma_{x,\theta,j}(t), \, 0 \leq t < \tau_{+}(T^{j}(x,\theta)), \notag \\
& \hspace{4cm}  (x,\theta) \in \mathrm{supp}(\alpha) \}\Big).
\end{align}
Finally,
\begin{equation}
\widetilde{\Omega} := \bigcup_{j=0}^{N}\Omega_{j},
\end{equation}
which resembles a closed beam of straight line segments with initial jet in $\mathrm{supp}(\alpha)$. Note that this construction \textit{depends} on the broken ray $\gamma_{x_{0},\theta_{0}}$ and is only valid for $(x,\theta)$ in some neighborhood of $(x_{0},\theta_{0})$. Also note that we cannot in general construct $\widetilde{\Omega}$ by reflecting the entire domain $\Omega$ repeatedly across the desired hyperplanes, since it is possible that the reflected versions of $\Omega$ will overlap (see Figure \ref{fig:extendedphasedomain2}). Finally, by reversing the reflection sequence, any covector $(\widetilde{z}, \widetilde{\xi}) \in T^{*}\widetilde{\Omega}$ corresponds to a unique covector $(z,\xi) \in T^{*}\Omega$.

We can define a distribution $\widetilde{f} \in \mathcal{D}'(\widetilde{\Omega})$ corresponding to $f \in \mathcal{E}'(\Omega)$ restricted to a neighborhood of $\gamma_{x_{0},\theta_{0}}$ by 
\begin{equation}
\langle \widetilde{f}, \phi \rangle_{\widetilde{\Omega}} = \sum_{j=0}^{N}\langle f,  (R_{0} \circ \ldots \circ R_{j})^{-1}\phi \rangle_{\Omega},
\end{equation}
where $R_{0} = \mathrm{Id}$. We denote by $\widetilde{\tau}_{-}(x,\theta)$ the (positive) distance to $\partial \widetilde{\Omega}$ in the direction $-\theta$. That is, $\widetilde{\tau}_{-}(x,\theta)$ is the distance from $x \in \widetilde{\Omega}$ to a point on the boundary $\partial \Omega$ along the line $\{ x - t \theta \, | \, t \geq 0 \}$.
\begin{remark}
$\widetilde{\tau}_{-}$ is only necessarily defined for $(x,\theta)$ such that the broken ray through $x$ in the direction $-\theta$ first intersects $E$ at a point $z$ with direction $\eta$ such that $\alpha(z,-\eta) > 0$. Furthermore, if we assume convexity of $\Omega$ so that $\tau_{-}$ is smooth, and if $E$ can be parametrized analytically near $x_{0} \in E$, then $\widetilde{\tau}_{-}$ will be a real analytic function of $(x,\theta)$ in some open set (see Figure \ref{fig:extendedphasedomain}).
\end{remark}
\begin{figure}
\centering
\def \svgwidth{0.8\columnwidth}
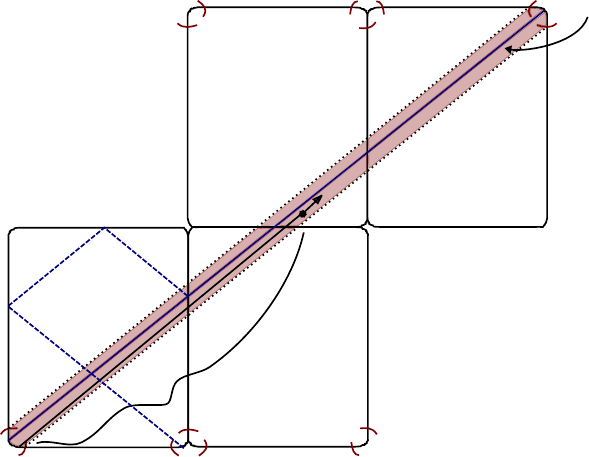
\caption{Illustration of how $\tau_{-}$ is extended to $\widetilde{\tau}_{-}$ on a subset of $\widetilde{\Omega} \times \mathbb{S}^{n-1}$.\label{fig:extendedphasedomain}}
\end{figure}
\begin{figure}
\centering
\def \svgwidth{0.8\columnwidth}
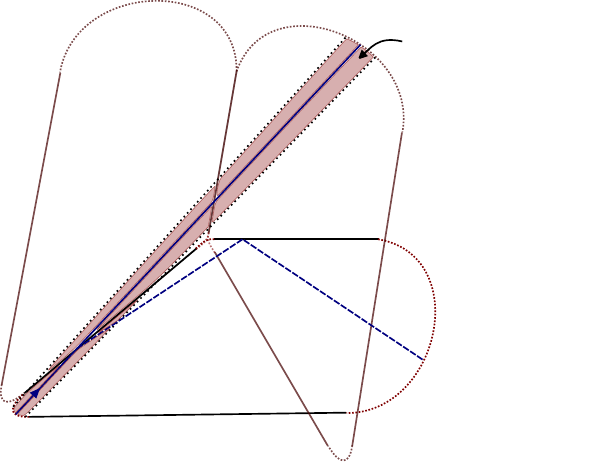
\caption{An example where one cannot reflect the entire domain $\Omega$ in order to construct $\widetilde{\Omega}$; doing so yields overlapping regions. Thus it is important to restrict to a localized beam of broken rays.\label{fig:extendedphasedomain2}}
\end{figure}

Using these constructions, we can write
\begin{equation}
\widetilde{w}(x,\theta) = \exp\left( -\int_{0}^{\widetilde{\tau}_{-}(x,\theta)}\widetilde{\sigma}(x-\tau \theta, \theta)\,d\tau \right), \qquad (x-\widetilde{\tau}_{-}(x,\theta)\theta,\theta) \in \mathrm{supp}(\alpha). 
\end{equation}
\begin{remark}
If $\widetilde{\tau}_{-}$ is analytic and $\sigma$ is constant, then $\widetilde{w}$ is also analytic. Furthermore, if $\sigma \equiv 0$ identically, then $\widetilde{w} \equiv 1$ which is analytic regardless of how $\widetilde{\tau}_{-}$ behaves. This gives some idea about the kinds of domains and attenuation functions which can give rise to an extended weight $\widetilde{w}$ that is analytic. In general, if $\partial \Omega$ is smooth then $\widetilde{w}$ is smooth due to the smoothness of $\tau_{-}$. Note that in \cite{hubenthal2}, for the two dimensional square it was necessary to assume that $\sigma = 0$ near the corners in order to redefine $\tau_{-}$ as the distance to a smooth boundary enclosing the original domain. However, such technicalities are avoided when $\Omega$ has smooth boundary.
\end{remark}

With all of these notations, we can write the broken ray transform for $(x,\theta) \in \mathrm{supp}(\alpha)$ as
\begin{equation}
\alpha(x,\theta) I_{\sigma, E}f(x,\theta) = \alpha(x,\theta) \int_{\mathbb{R}_{+}}\widetilde{w}(x+t\theta, \theta)\widetilde{f}(x + t \theta)\,dt,
\end{equation}
which is identical to a standard x-ray transform of $\widetilde{f}$ on the extended domain $\widetilde{\Omega}$. This will allow us to apply the analytic microlocal techniques of \cite{xraygeneric}. From (\cite{hubenthal2}, Lemma 1), we also have that $\alpha I_{\sigma, E}: L^{2}(\Omega \times \mathbb{S}^{n-1}) \to L^{2}(\Gamma_{-}, d\Sigma)$ is bounded.

For a given $M_{max} \in \mathbb{N}$, we define the \textit{visible set} $\mathcal{M}$ by
\begin{align}
\mathcal{M} & := \{x \in \Omega \, | \, \forall \xi \in \mathbb{S}^{n-1}, (x,\xi) \in N^{*}\gamma \textrm{ for some regular broken ray $\gamma$}\\
& \hspace{7cm} \textrm{ with $N(\gamma) \leq M_{max}$}\}. \notag
\end{align}
We have the following injectivity result for analytic weights $\widetilde{w}$:
\begin{theorem}Let $E \subset \partial \Omega$ be open, $K \Subset \Omega$ a set restricting the support of $f$, $M_{max} \in \mathbb{N}$ be the maximum number of reflections considered, $\mathcal{M}$ be the corresponding visible set with respect to $E$ and $K$, and $\mathcal{M}_{0} \Subset \mathcal{M}$. Also suppose $\sigma$ is such that $\widetilde{w}$ is analytic (e.g. $\sigma \equiv 0$). Then $I_{\sigma, E}$ is injective on $L^{2}(\mathcal{M}_{0})$. \label{thm:injectiveeuc}\end{theorem}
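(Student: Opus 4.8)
The plan is to follow the analytic microlocal strategy of \cite{xraygeneric}, exactly as in \cite{hubenthal2}: assuming $I_{\sigma,E}f=0$ for some $f\in L^{2}(M)$, I will show that $f$ is real-analytic on the open set $\mathcal{M}$, and then conclude $f\equiv 0$ from the support condition. Since $\overline{M}\Subset\mathcal{M}\subset\Omega$, the function $f$ is a compactly supported element of $\mathcal{E}'(\Omega)$, so (by the extension of $\alpha I_{\sigma,E}$ to distributions mentioned above, or already by Lemma \ref{lemma:raybounded} for the $L^{2}$ part) the localized data $\alpha I_{\sigma,E}f$ makes sense and vanishes for every admissible cutoff $\alpha$.

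First I would fix an arbitrary $x_{0}\in\mathcal{M}$ and an arbitrary $\xi_{0}\in\mathbb{S}^{n-1}$. By the definition of the visible set there is a regular broken ray $\gamma=\gamma_{y_{0},\theta_{0}}$ with $(y_{0},\theta_{0})\in\Gamma_{-}(E)$, $N(\gamma)\le N_{max}$, and $(x_{0},\xi_{0})\in N^{*}\gamma$; say $x_{0}$ lies on the $k$-th segment of $\gamma$ and $\xi_{0}$ is conormal to it. Choose $\alpha\in C_{0}^{\infty}(\Gamma_{-})$ equal to $1$ near $(y_{0},\theta_{0})$ with support small enough that every broken ray in $\mathrm{supp}(\alpha)$ is regular and reflects off the \emph{same} sequence of hyperplanes $\{P_{1},\dots,P_{N}\}$ as $\gamma$; this is possible because $\gamma$ is regular --- in particular its reflection points lie in the interiors of the flat faces of $\partial\Omega\setminus E$, away from $\partial E$ --- and the billiard map depends continuously on the initial jet away from such degeneracies. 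Using this $\alpha$, build the unfolded beam $\widetilde{\Omega}$, the unfolded distribution $\widetilde{f}$, and the extended weight $\widetilde{w}$ as in the construction above. By hypothesis $\widetilde{w}$ is analytic on the relevant open set, and it is nowhere vanishing since it is an exponential. The identity
\[
\alpha(x,\theta)I_{\sigma,E}f(x,\theta)=\alpha(x,\theta)\int_{\mathbb{R}_{+}}\widetilde{w}(x+t\theta,\theta)\,\widetilde{f}(x+t\theta)\,dt
\]
then presents the localized data as a standard analytic-weighted x-ray transform of $\widetilde{f}$ over the lines $\widetilde{\gamma}_{x,\theta}$, $(x,\theta)\in\mathrm{supp}(\alpha)$; as $(x,\theta)$ ranges over a neighborhood of $(y_{0},\theta_{0})$ these sweep out a full neighborhood of the line $\widetilde{\gamma}_{y_{0},\theta_{0}}$ in the space of lines meeting $\widetilde{\Omega}$ (the parameter counts match and the map $(x,\theta)\mapsto\widetilde{\gamma}_{x,\theta}$ is a local diffeomorphism, $\theta_{0}$ being transverse to $E$).

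Since $I_{\sigma,E}f=0$, this weighted transform of $\widetilde{f}$ vanishes on that neighborhood of $\widetilde{\gamma}_{y_{0},\theta_{0}}$. I would then invoke the analytic microlocal regularity theorem of \cite{xraygeneric}: for a weighted x-ray transform with nonvanishing analytic weight, the localized normal operator is an elliptic analytic pseudodifferential operator of order $-1$ in the directions conormal to the lines, so vanishing of the transform near a line $\ell$ forces $(z,\zeta)\notin WF_{A}(\widetilde{f})$ whenever $z\in\ell$ and $\zeta\perp\ell$. Applying this to $\ell=\widetilde{\gamma}_{y_{0},\theta_{0}}$ and to the point $(\widetilde{x}_{0},\widetilde{\xi}_{0})$, the image of $(x_{0},\xi_{0})$ under the reflection composition $R_{1}\circ\cdots\circ R_{k}$ that carries the $k$-th sheet into $\widetilde{\Omega}$, we obtain $(\widetilde{x}_{0},\widetilde{\xi}_{0})\notin WF_{A}(\widetilde{f})$. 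Near $\widetilde{x}_{0}$ the distribution $\widetilde{f}$ coincides with a rigid (affine isometric, hence real-analytic) image of $f$, and $WF_{A}$ is natural under analytic diffeomorphisms, so $(x_{0},\xi_{0})\notin WF_{A}(f)$. As $\xi_{0}$ was arbitrary, $f$ is real-analytic near $x_{0}$; as $x_{0}\in\mathcal{M}$ was arbitrary, $f\in C^{\omega}(\mathcal{M})$.

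Finally, $f$ is supported in $\overline{M}$ with $\overline{M}\Subset\mathcal{M}$. No connected component $U$ of the open set $\mathcal{M}$ can be contained in the compact set $\overline{M}$ (otherwise $U$ would be both open and closed in $\rn{n}$), so $U\setminus\overline{M}$ is a nonempty open set on which $f$ vanishes; by analyticity $f\equiv 0$ on $U$, hence on all of $\mathcal{M}$ and thus on $\rn{n}$. The main obstacle is the reduction in the middle two paragraphs: verifying that $\alpha$ can be chosen to stabilize \emph{both} regularity and the reflection pattern, that the unfolded lines genuinely fill a neighborhood in line space so that the hypotheses of the estimate in \cite{xraygeneric} are actually met, and carrying out the bookkeeping that identifies $WF_{A}(\widetilde{f})$ sheet-by-sheet with $WF_{A}(f)$; once this is in place, the analytic microlocal machinery of \cite{xraygeneric} applies essentially verbatim.
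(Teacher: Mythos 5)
Your proposal follows essentially the same route as the paper: the paper isolates your middle two paragraphs as a separate statement (Proposition \ref{prop:microlocaleuc}, proved by the same localization, choice of cutoff $\alpha$ stabilizing the reflection sequence, unfolding to $\widetilde{\Omega}$, invocation of the analytic microlocal regularity argument of \cite{xraygeneric}, and sheet-by-sheet pullback of $\mathrm{WF}_{A}$ under the reflections), and then deduces the theorem exactly as you do, from analyticity of $f$ on the visible set together with its compact support via analytic continuation. Your write-up is correct and, if anything, spells out the final unique-continuation step more carefully than the paper's one-line conclusion.
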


To formulate a stability estimate, we must first parametrize a family of regular broken rays, having an upper bound on the number of reflections, and whose conormal bundles cover $T^{*}\Omega$. Let $K \Subset \Omega$ and let $\alpha$ be a smooth cutoff function on $\Gamma_{-}$ whose support contains only regular broken rays with respect to $E$ and $K$. We define the \textit{microlocally visible set} with respect to $(E, K, \alpha)$ by
\begin{equation}
\mathcal{M}' := \{ (z,\xi) \in T^{*}\Omega \, | \, (z,\xi) \in N^{*}\gamma_{x,\theta} \textrm{ for some }(x,\theta) \textrm{ with }\alpha(x,\theta) > 0\}.
\end{equation}
The set $\mathcal{M}'$ will be useful in recovering some of the singularities of $f$ in cases where injectivity of $I_{\sigma, E}$ does not occur. 

We briefly introduce the normal operator $\mathcal{N}_{\sigma, E, \alpha}$ defined by
\begin{equation}
\mathcal{N}_{\sigma, E, \alpha} := (\alpha I_{\sigma, E})^{*}(\alpha I_{\sigma, E}),
\end{equation}
where $\alpha \in C_{0}^{\infty}(\Gamma_{-})$. Here $(\alpha I_{\sigma, E})^{*}: L^{2}(\Gamma_{-}, d\Sigma) \to L^{2}(\Omega)$ is the adjoint of $\alpha I_{\sigma, E}$ as an operator from $L^{2}(\Omega)$ to $L^{2}(\Gamma_{-}, d\Sigma)$ (restricted to functions with no angular dependence). We will discuss these notions more in \S \ref{sec:stability}, but for now we may state the following stability result analogous to (\cite{xraygeneric}, Theorem 2).
\begin{theorem}
\begin{enumerate}
\item[(a)] Let $K \Subset \Omega$. Fix $\sigma \in C^{2}$ and choose $\alpha \in C^{\infty}(\Gamma_{-})$ to be a smooth cutoff function supported on a collection of regular broken rays with respect to $E$ and $K$ and with at most $M_{max}$ reflections. Fix a set $\mathcal{M}_{0} \Subset \mathcal{M}$ compactly contained in the visible set $\mathcal{M}$ with respect to $(E, K, \alpha)$. If $I_{\sigma, E, \alpha}$ is injective on $L^{2}(\mathcal{M}_{0})$, then
\begin{equation}
\frac{1}{C}\|f\|_{L^{2}(\mathcal{M}_{0})} \leq \| \mathcal{N}_{\sigma, E, \alpha}f \|_{H^{1}(\Omega)} \leq C\|f\|_{L^{2}(\mathcal{M}_{0})}. \label{eq:stabilityestimate}
\end{equation}
\item[(b)] Let $\alpha^{0}$ be as above related to some fixed $\sigma_{0}$. Assume that $I_{\sigma_{0}, E, \alpha^{0}}$ is injective on $L^{2}(\mathcal{M}_{0})$. Then estimate (\ref{eq:stabilityestimate}) remains true for $(\sigma, \alpha)$ in a small $C^{2}$ neighborhood of $(\sigma_{0}, \alpha^{0})$ with a uniform constant $C > 0$.
\end{enumerate}\label{thm:stabilityestimate}
\end{theorem}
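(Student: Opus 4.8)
The strategy is to reduce the stability estimate for $I_{\sigma,E,\alpha}$ to the already-known stability estimate for the ordinary analytic-weight x-ray transform on the family of unfolded domains $\widetilde{\Omega}$, following the scheme of (\cite{xraygeneric}, Theorem 2), and then to upgrade the result in two directions: from analytic $\sigma$ to $C^2$ attenuation (part (a)), and to a $C^2$ neighborhood of a fixed $(\sigma_0,\alpha^0)$ (part (b)). The backbone is a microlocal parametrix construction for the normal operator $N_{\sigma,E,\alpha} := (\alpha I_{\sigma,E})^* \alpha I_{\sigma,E}$, which by \S\ref{sec:normalop}--\ref{sec:changevar} is a classical $\Psi$DO of order $-1$ plus a smoothing operator with $C_0^\infty$ Schwartz kernel; the upper bound $\|N_{\sigma,E,\alpha}f\|_{H^1}\leq C\|f\|_{L^2(M)}$ is then immediate from $\Psi$DO mapping properties (order $-1$: $L^2\to H^1$) plus boundedness of $\alpha I_{\sigma,E}$ from Lemma \ref{lemma:raybounded}.

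\textbf{Step 1: Ellipticity on the microlocally visible set.} Using the local path unfolding, for each regular broken ray $\gamma_{x_0,\theta_0}$ in $\mathrm{supp}(\alpha)$ we pass to $\widetilde\Omega$ where $\alpha I_{\sigma,E}$ becomes a weighted x-ray transform $\widetilde X_{\widetilde w}\widetilde f$. I would show that the principal symbol of $N_{\sigma,E,\alpha}$ at $(z,\xi)$ is a sum over all broken rays $\gamma_{x,\theta}$ with $(z,\xi)\in N^*\gamma_{x,\theta}$, $\alpha(x,\theta)>0$, of strictly positive contributions proportional to $|\widetilde w|^2$ evaluated on the relevant segment; since $\widetilde w = \exp(-\int \widetilde\sigma)>0$ and the weight never vanishes, this is $>0$ exactly on $\mathcal M'$ (and in particular on a conic neighborhood of $T^*M$ when $M\Subset\mathcal M$). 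Hence $N_{\sigma,E,\alpha}$ is elliptic of order $-1$ microlocally over $M$.

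\textbf{Step 2: From a microlocal parametrix to the two-sided estimate.} Let $B$ be a properly supported $\Psi$DO of order $1$ that is a microlocal left parametrix for $N_{\sigma,E,\alpha}$ over a neighborhood of $\overline M$; then $BN_{\sigma,E,\alpha} = \chi + S$ with $\chi\in C_0^\infty$ equal to $1$ near $M$ and $S$ smoothing. For $f\in L^2(M)$ this gives $\|f\|_{L^2(M)} \leq \|BN_{\sigma,E,\alpha}f\|_{L^2} + \|Sf\|_{L^2} \leq C\|N_{\sigma,E,\alpha}f\|_{H^1} + C\|f\|_{H^{-s}(M)}$ for any $s$. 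To absorb the compact error term $\|f\|_{H^{-s}(M)}$ one runs the standard functional-analysis argument (as in \cite{xraygeneric}): if the estimate failed there would be a sequence $f_k\in L^2(M)$, $\|f_k\|_{L^2}=1$, with $\|N_{\sigma,E,\alpha}f_k\|_{H^1}\to 0$; by compactness $f_k\to f$ in $H^{-s}$, the estimate with error forces $f_k$ Cauchy in $L^2$, so $f_k\to f$ in $L^2(M)$ with $\|f\|_{L^2}=1$ and $N_{\sigma,E,\alpha}f=0$; ellipticity gives $f\in C^\infty$, and since $\alpha I_{\sigma,E}$ is continuous on $\mathcal E'(\Omega)$ one concludes $\alpha I_{\sigma,E}f=0$, contradicting the injectivity hypothesis on $L^2(M)$. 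This yields (a) for analytic $\sigma$, and in fact verbatim for $\sigma\in C^2$, since only $C^2$ regularity of $\sigma$ (hence of $\widetilde w$, hence of the full symbol of $N$) was needed for the $\Psi$DO calculus of \S\ref{sec:normalop}--\ref{sec:changevar}; injectivity on $L^2(M)$ for such $\sigma$ is supplied by \S\ref{sec:reducingsmoothness} (perturbation of the analytic-weight case, Theorem \ref{thm:injectiveeuc}).

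\textbf{Step 3: Uniformity in a $C^2$ neighborhood — the main obstacle.} This is the delicate part, and I would handle it as in (\cite{xraygeneric}, Theorem 2(b)). One shows that the map $(\sigma,\alpha)\mapsto N_{\sigma,E,\alpha}$ is continuous from a $C^2$ neighborhood of $(\sigma_0,\alpha^0)$ into $\Psi^{-1}_{cl}$ (in the appropriate Fréchet topology on symbols) plus $C_0^\infty$-kernel smoothing operators depending continuously on the parameters; crucially the unfolding construction, the hyperplane sequence $\{P_1,\dots,P_N\}$, and $\widetilde\Omega$ are \emph{stable} under small perturbations of $\alpha$ because $\mathrm{supp}(\alpha)$ stays inside the open set of broken rays hitting that fixed sequence of faces, and $\widetilde\tau_-$, $\widetilde w$ vary continuously (indeed $C^2$) in $\sigma$. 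Then the parametrix $B=B_{\sigma_0}$ built for $(\sigma_0,\alpha^0)$ remains a parametrix up to an operator of small norm plus a compact error for nearby $(\sigma,\alpha)$, so $\|f\|_{L^2(M)}\leq C\|N_{\sigma,E,\alpha}f\|_{H^1} + C\|f\|_{H^{-s}(M)}$ with $C$ \emph{uniform}. The only remaining point is to prevent the constant from blowing up when the error term is absorbed: here one invokes a compactness-across-parameters argument — if no uniform $C$ existed, extract $(\sigma_k,\alpha^k)\to(\sigma_0,\alpha^0)$ and $f_k$ with $\|f_k\|_{L^2}=1$, $\|N_{\sigma_k,E,\alpha^k}f_k\|_{H^1}\to0$; pass to a limit to get a nonzero $f\in L^2(M)$ with $N_{\sigma_0,E,\alpha^0}f=0$, contradicting injectivity of $I_{\sigma_0,E,\alpha^0}$. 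I expect the bookkeeping for the continuous dependence of the full $\Psi$DO symbol of $N_{\sigma,E,\alpha}$ on $(\sigma,\alpha)$ — through the change of variables of \S\ref{sec:changevar} — to be the genuinely technical step; everything else is a routine adaptation of the Frigyik--Stefanov--Uhlmann argument.
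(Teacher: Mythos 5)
Your proposal is correct, and for part (a) it follows essentially the paper's route: the decomposition of $N_{\sigma,E,\alpha}$ into a classical pseudodifferential operator of order $-1$ elliptic on $\mathcal{M}'$ plus a $C_{0}^{\infty}$-kernel term (Proposition \ref{prop:normaldecompositioneuc}), the resulting a priori estimate with a compact $H^{-s}$ error (Proposition \ref{prop:aprioristability}), and the standard absorption-by-contradiction using the injectivity hypothesis; the upper bound is routine. (Minor point: injectivity on $L^{2}(M)$ is a \emph{hypothesis} of part (a), so you do not need to ``supply'' it from \S \ref{sec:reducingsmoothness}.) The genuine divergence is in part (b). The paper does not run a compactness-across-parameters argument; it proves the quantitative perturbation bound $\|(N_{\sigma',E,\alpha'}-N_{\sigma,E,\alpha})f\|_{H^{1}(\Omega)} \leq C\delta\|f\|_{L^{2}(K)}$ (Proposition \ref{prop:normalperturb}), after which (b) is an immediate triangle-inequality consequence of (a) applied to $(\sigma_{0},\alpha^{0})$: one has $\|N_{\sigma,E,\alpha}f\|_{H^{1}} \geq \|N_{\sigma_{0},E,\alpha^{0}}f\|_{H^{1}} - C\delta\|f\|_{L^{2}} \geq (C_{0}^{-1}-C\delta)\|f\|_{L^{2}}$ for $\delta$ small. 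Your sequential extraction can be made to work, but it is strictly heavier and has a soft spot: the phrase ``pass to a limit to get a nonzero $f$'' glosses over the fact that $f_{k}$ only converges weakly in $L^{2}$; to see that the limit is nonzero you need the uniform a priori estimate to force $\|f_{k}\|_{H^{-s}}$ bounded below, and to conclude $N_{\sigma_{0},E,\alpha^{0}}f=0$ you need a uniform order-$(-1)$ bound on the family $N_{\sigma_{k},E,\alpha^{k}}$ to pass to the limit in distributions --- all of which amounts to re-deriving the continuity in $(\sigma,\alpha)$ that Proposition \ref{prop:normalperturb} states directly. The direct perturbation estimate gives uniformity of $C$ in one line and avoids compactness entirely, so I would replace your Step 3 with it; the rest of your argument stands.
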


\begin{example}As a straightforward example to illustrate Theorem \ref{thm:injectiveeuc}, we consider the case that $\Omega$ is the unit cube $[0,1]^{3}$. Of course, the boundary $\partial \Omega$ is not smooth at its edges and corner points, but similarly as in \cite{hubenthal2} with the square, the special structure of the cube yields a global tiling of $\rn{3}$ via reflection across edges, and the structure of the boundary yields an unambiguous definition of reflections at corner points. Of course, one could modify $E$ to include a neighborhood of all corners/edges and then smooth them to make the example more directly related to the framework of this paper.

Assuming $\sigma \equiv 0$, we have that $\widetilde{w} \equiv 1$ is analytic. If $E$ is an open subset of $\partial \Omega$ which contains $3$ faces meeting at a corner point, and satisfies the condition
\begin{equation}
\forall x \in \Omega, \, \forall \xi \in \mathbb{S}^{n-1}, \, \left( \{z \in \rn{3} \, | \, (z-x) \cdot \xi = 0 \} \cap \partial \Omega \right) \setminus \partial E \neq \emptyset, \label{eq:condition}
\end{equation}
then $\mathcal{M} = \Omega$. In other words, the boundary of $E$ must not lie in any given hyperplane. This is the case if $\partial E$ has suitable curvature (see Figure \ref{fig:cubeexample}). Such a condition ensures that for any unit covector $(x,\xi)$, we can find a normal covector $(x,\theta)$ which intersects $\partial \Omega$ suitably far from $\partial E$, so as to avoid possible singularities of the operator $I_{\sigma,E}$ resulting from jumps in the function $N(z,\eta)$. In Figure \ref{fig:cubeexample} we then have that $I_{0,E}$ is injective for $f \in \mathcal{E}'(\Omega)$. Note that if $E$ does not contain $3$ adjacent faces, then there are simple counterexamples where injectivity does not occur. Specifically, take $E_{0}$ to be an open subset of $\partial \Omega$ containing the $3$ faces adjacent to the lower left corner of the cube (up is the $z$-direction). Then remove a narrow slab to define
\begin{equation*}
E := E_{0} \setminus \{(x_1,x_2,x_3) \in \overline{\Omega} \, | \, x_3 \geq 1-\epsilon \}
\end{equation*}
where $\epsilon > 0$ is arbitrarily small. Then any covectors of the form $(x, (0,0,\lambda))$ with $x_{3} > 1-\epsilon$ are not in $\mathcal{M}'$ since any broken ray whose conormal bundle contains said covector would be a trapped ray that never touches $E$. In this sense, the requirement that $E$ contain $3$ adjacent edges is sharp.
\end{example}
One important point to note is that just as for the usual x-ray transform, in dimensions $3$ or higher it is conceptually much easier to obtain injectivity, since then there are many possible directions normal to a given unit covector. To contrast, in \cite{hubenthal2} one has to be very careful to utilize the geometry of $E$, the square, and the compact support of $f$ to be able to recover all possible wavefront directions at a given point.
\begin{figure}
\centering
\def \svgwidth{0.5\columnwidth}
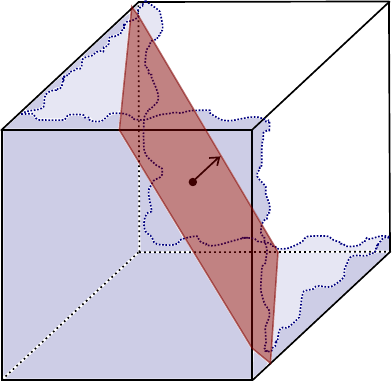
\caption{Illustration of one possible choice of $E$ on the $3$-dimensional cube such that $I_{0,E}$ is injective.\label{fig:cubeexample}}
\end{figure}


\section{Injectivity of $I_{\sigma, E}$ for Analytic Weights $\widetilde{w}$ \label{sec:injective}}
In this section we establish an injectivity result for $I_{\sigma, E}$ using the analytic microlocal approach of \cite{xraygeneric}. As before, let $M_{max} \in \mathbb{N}$ and let $\alpha(x,\theta)$ to be a smooth cutoff function on $\Gamma_{-}$ supported on a set of regular broken rays with at most $M_{max}$ reflections. As can be seen in \cite{hubenthal2} for certain choices of $E$ for the square and depending on the a priori known support of $f$, there exist regular broken rays which reflect at or near boundary points of $E$. However, in dimensions $3$ or higher, as we will see it is generally possible to recover all wavefront covectors of $f$ at a given point $x$ without having to deal with broken rays that reflect at boundary points of $E$.

In future computations, we often assume further that $\alpha = \sum_{k=1}^{m}\alpha_{k}$, and that for all $1 \leq k \leq m$, each regular broken ray in the support of $\alpha_{k}$ reflects against the same sequence of hyperplanes before returning to $E$. This will allow us to get as global an injectivity result as possible for $I_{\sigma, E}$. Moreover, we can then primarily focus on each term separately with respect to $k$, which are easier to understand from a geometric standpoint.

Following the same general approach as in \cite{hubenthal2} for the 2-dimensional square, we consider the case when $\widetilde{w}$ is real-analytic for each cutoff $\alpha_{k}$ (recall that $\widetilde{w}$ depends on the sequence of reflection faces, and hence depends on $\alpha_{k}$). We have the following useful proposition concerning the analytic wavefront set of an arbitrary $f \in \mathcal{E}'(\Omega)$. 
\begin{proposition}
Let $f \in \mathcal{E}'(\Omega)$. Suppose that $I_{\sigma, E}f(x,\theta) = 0$ for all $(x,\theta)$ in a small neighborhood $V$ of $(x_{0},\theta_{0}) \in \Gamma_{-}(E)$, where $\gamma_{x_{0},\theta_{0}}$ is a regular broken ray. Further, suppose that $\widetilde{w}(z,\eta)$ is analytic for all $(z,\eta)$ such that $(z - \widetilde{\tau}_{-}(z,\eta)\eta, \, \eta) \in V$. Then $\mathrm{WF}_{A}(f) \cap N^{*}\gamma_{x_{0},\theta_{0}} = \emptyset$.\label{prop:microlocaleuc}
\end{proposition}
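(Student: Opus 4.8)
The plan is to reduce the statement to the known analytic-microlocal result for the ordinary weighted x-ray transform with analytic weight, as developed in \cite{xraygeneric}, by passing to the unfolded picture. First I would fix the cutoff $\alpha \in C_0^\infty(\Gamma_-)$ equal to $1$ near $(x_0,\theta_0)$, supported in $V$, and small enough that every broken ray in $\mathrm{supp}(\alpha)$ reflects against the same sequence of hyperplanes $\{P_1,\dots,P_N\}$ as $\gamma_{x_0,\theta_0}$; this is possible because $\gamma_{x_0,\theta_0}$ is a regular broken ray and the reflection combinatorics are locally constant near a ray that avoids $\partial E$ (or, more generally, near any regular ray, after shrinking). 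With this choice, the constructions recalled in the excerpt give an augmented domain $\widetilde\Omega$, an unfolded distribution $\widetilde f \in \mathcal{D}'(\widetilde\Omega)$ associated to $f\in\mathcal{E}'(\Omega)$, the extended attenuation weight $\widetilde w$, and the identity
\begin{equation*}
\alpha(x,\theta) I_{\sigma,E} f(x,\theta) = \alpha(x,\theta)\int_{\mathbb{R}_+}\widetilde w(x+t\theta,\theta)\,\widetilde f(x+t\theta)\,dt,
\end{equation*}
so that on $V$ the broken ray transform of $f$ coincides with a genuine weighted x-ray transform $X_{\widetilde w}\widetilde f$ of $\widetilde f$ on $\widetilde\Omega$, restricted to the open set of lines through $V$.

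Next I would invoke the hypotheses: $I_{\sigma,E}f$ vanishes on $V$, hence $X_{\widetilde w}\widetilde f(\ell)=0$ for all lines $\ell$ in the corresponding open family $\widetilde V$ of lines in $\widetilde\Omega$; and $\widetilde w$ is analytic on the relevant set (this is exactly the hypothesis that $\widetilde w(z,\eta)$ is analytic whenever $(z-\widetilde\tau_-(z,\eta),\eta)\in V$, which covers all points on the unfolded rays meeting $V$). Then the analytic-microlocal argument of \cite{xraygeneric} — the stationary-phase/FBI-transform analysis of the normal operator $X_{\widetilde w}^* X_{\widetilde w}$ localized to a single direction, which shows that vanishing of the weighted transform over an open set of lines through a point $\widetilde z$ in a direction conormal to a fixed line forces $\widetilde z\notin\mathrm{WF}_A(\widetilde f)$ in the conormal directions to that line — applies verbatim to give
\begin{equation*}
\mathrm{WF}_A(\widetilde f)\cap N^*\widetilde\gamma_{x_0,\theta_0}=\emptyset,
\end{equation*}
where $\widetilde\gamma_{x_0,\theta_0}$ is the (straight) unfolded ray. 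Here I must check that the open family $\widetilde V$ of lines is large enough: it contains all lines near $\widetilde\gamma_{x_0,\theta_0}$, so it contains lines in every direction in a neighborhood of the direction of $\widetilde\gamma_{x_0,\theta_0}$, which is precisely what the microlocal analyticity argument requires to control the full conormal fiber $N^*_{\widetilde z}\widetilde\gamma_{x_0,\theta_0}$.

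Finally I would transfer the conclusion back to $\Omega$. Near $\gamma_{x_0,\theta_0}$, each reflected copy $R_0\circ\cdots\circ R_j(\Omega\cap\{\text{neighborhood of }\gamma_{x_0,\theta_0,j}\})$ sits as an open piece of $\widetilde\Omega$ on which $\widetilde f$ restricts (up to the affine isometry $R_0\circ\cdots\circ R_j$) to $f$; since each $R_0\circ\cdots\circ R_j$ is a Euclidean isometry it is real-analytic and preserves analytic wavefront sets, and by construction it carries $N^*\widetilde\gamma_{x_0,\theta_0}$ onto the $j$-th segment's contribution to $N^*\gamma_{x_0,\theta_0}$. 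Reading off the piece corresponding to $j=0$ (where $R_0=\mathrm{Id}$, so $\widetilde f=f$ near $\gamma_{x_0,\theta_0,0}$) already gives $\mathrm{WF}_A(f)\cap N^*\gamma_{x_0,\theta_0,0}=\emptyset$, and the analogous pieces for $1\le j\le N$ give the conormal directions along the remaining segments, so altogether $\mathrm{WF}_A(f)\cap N^*\gamma_{x_0,\theta_0}=\emptyset$. The main obstacle I anticipate is the bookkeeping in this last transfer step — namely making precise that $\widetilde f$, although defined as a sum of pushforwards that may a priori overlap, agrees with an isometric copy of $f$ on each individual sheet near the unfolded ray (so no cancellation can hide a singularity), and that the union over $j$ of the isometric images of $N^*\widetilde\gamma_{x_0,\theta_0}$ exhausts $N^*\gamma_{x_0,\theta_0}$; both follow from the locality of the construction (the sheets are disjoint in a neighborhood of $\widetilde\gamma_{x_0,\theta_0}$ once $\mathrm{supp}(\alpha)$ is small) but deserve a careful sentence or two. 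Everything else is a direct citation of \cite{xraygeneric}.
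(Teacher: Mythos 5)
Your proposal is correct and follows essentially the same route as the paper: unfold the beam of broken rays near $\gamma_{x_{0},\theta_{0}}$ into the augmented domain $\widetilde{\Omega}$, apply the analytic microlocal argument of Proposition 1 of \cite{xraygeneric} to the weighted x-ray transform of $\widetilde{f}$ with analytic weight $\widetilde{w}$, and pull the conclusion back through the reflections $R_{j}, S_{j}$, which are real-analytic isometries preserving $\mathrm{WF}_{A}$. Your extra remark about the sheets of $\widetilde{f}$ being disjoint near the unfolded ray (so no cancellation can mask a singularity) is a point the paper leaves implicit, but it is resolved exactly as you indicate by the locality of the beam construction.
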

\begin{proof} The proof is very similar to that of (\cite{hubenthal2}, Proposition 1), which deals specifically with the square in $\rn{2}$, but we provide the details here for convenience. We must construct coordinates near the broken ray $\gamma_{0}$. By taking the neighborhood of $(x_{0},\theta_{0})$ to be suitably small, we have that all nearby broken rays $\gamma_{x,\theta}$ hit no corner points and reflect against the same sequence of hyperplanes, which we denote by $\{P_{1},\ldots, P_{N}\}$, where $M = M(x_{0},\theta_{0})$. Let $\{x_{0,1}, \ldots, x_{0,M}\}$ be the sequence of reflection points on $\partial \Omega$ for $\gamma_{x_{0},\theta_{0}}$, so that $x_{0,j} \in P_{j}$. We may write $P_{j} = \{x \in \rn{n}\, | \, (x - x_{0,j}) \cdot \xi_{j} = 0\}$ where $\xi_{j} = \nu(x_{0,j})$ is a unit normal vector to $P_{j}$.

Note that for a general affine hyperplane $P = \{x \in \rn{n} \, | \, (x - a)\cdot \xi = 0\}$, reflection across $P$ is given by
\begin{equation}
R_{P}(x) = R_{\xi, a}(x) =  x + 2\xi (a-x)\cdot \xi.
\end{equation}
As described in \S \ref{sec:mainresults}, we construct a larger $\widetilde{\Omega}$ consisting of $N$ reflected copies of $\Omega$ which are glued together with the original $\Omega$, with the reflections depending on the given sequence of hyperplanes.

Now choose a point $p_{0}$ on the line $x_{0} + t\theta_{0}$ for $t < 0$ so that $p_{0} \notin \widetilde{\Omega}$. Define $x = p_{0} + t\theta$. We use the splitting $y = (y',y^{n})$ for $y \in \rn{n}$ and $(\theta', \theta^{n})$ as the coordinates for $\theta$. Then $(\theta, t)$ are local coordinates near any point of $\widetilde{\Omega} \cap \widetilde{\gamma}_{x_{0},\theta_{0}}$ so long as $|\theta| = 1$ and $|\theta - \theta_{0}| \ll 1$. We can assume without loss of generality that $\theta_{0}' = 0, \, \theta_{0}^{n} = 1$. Write $x = (\theta', t)$. Then $x$ are the coordinates we're looking for defined on
\begin{equation*}
U = \{x = (\theta', t) \, | \, |\theta'| < \epsilon, \, l^{-} < t < l^{+} \} \subset \widetilde{\Omega}.
\end{equation*}
Since $f$ is compactly supported inside of $\Omega$, we may shrink $\epsilon$ as necessary so that we can take $l^{-}, l^{+}$ to be constant. Now let $(z_{0}, \xi_{0} )\in N^{*}\gamma_{0}$ and consider the corresponding reflected covector via path unfolding given by $(\widetilde{z}_{0}, \widetilde{\xi}_{0}) \in N^{*}\widetilde{\gamma}_{0}$. Specifically, let $\{\xi_{1},\xi_{2},\ldots, \xi_{j}\}, \, j \leq M$ be the sequence of normal vectors corresponding to the ordered reflection points of the regular broken ray $\gamma_{x_{0},\theta_{0}}$ up until reaching the point $z_{0}$. Define $\widetilde{z}_{0} = Rz_{0} := R_{1}R_{2} \cdots R_{j}z_{0}$. Also, consider the operators $S_{j}:\rn{n} \to \rn{n}$ defined as reflections across the hyperplanes $z \cdot \xi_{j} = 0$ that pass through the origin. Specifically, $S_{j}v = v - 2(v \cdot \xi_{j})\xi_{j}$. We then define $\widetilde{\xi}_{0} = S\xi_{0} := S_{1}S_{2} \cdots S_{j}\xi_{0}$. The argument of Proposition 1 in \cite{xraygeneric} shows that $(\widetilde{z}_{0}, \widetilde{\xi}_{0}) \notin \mathrm{WF}_{A}(\widetilde{f})$. 

Now we undo the reflection process to conclude that $(z_{0}, \xi_{0}) \notin \mathrm{WF}_{A}(f)$. In particular, we use the respective inverse transformations $S^{-1}$ and $R^{-1}$, generated by $S_{j}^{-1} = S_{j}$ and $R_{j}^{-1} = R_{j}$ to recover $(z_{0},\xi_{0})$ from $(\widetilde{z}_{0}, \widetilde{\xi}_{0})$. Note that the reflections $S_{j}, R_{j}$ preserve the singularities of $f$, and so the analytic wavefront set of $\widetilde{f}$ transforms in the obvious way. Thus $(\widetilde{z}_{0}, \widetilde{\xi}_{0}) = (Rz_{0}, S\xi_{0}) \notin \mathrm{WF}_{A}(\widetilde{f}) \Longrightarrow (z_{0}, \xi_{0}) \notin \mathrm{WF}_{A}(f)$.
\end{proof}

\begin{proof}[Proof of Theorem \ref{thm:injectiveeuc}] Suppose that $I_{\sigma, E}f = 0$. By Proposition \ref{prop:microlocaleuc} we have that $f$ is analytic on $K$. Since $\mathrm{supp}(f) \subset K$, we have that $f$ can be extended to an entire function and hence must be identically zero.\end{proof}

\section{Stability \label{sec:stability}}
We seek to show the stability result of Theorem \ref{thm:stabilityestimate} similarly to how stability was shown in \cite{hubenthal2} for the square. The first step is to analyze the normal operator $\mathcal{N}_{\sigma, E, \alpha} := I_{\sigma, E, \alpha}^{*}I_{\sigma, E, \alpha}$ and deduce that it is in fact a sum of a pseudo differential operator of order $-1$ elliptic on $\mathcal{M}'$ plus an integral operator with $C^{\infty}$ Schwartz kernel. The key idea is to utilize a clever change of variables which depends on each summand in the expansion of $\mathcal{N}_{\sigma, E, \alpha}$. Such a change of variables is based on the idea of unfolding broken rays into straight lines in the corresponding augmented domain $\widetilde{\Omega}$. We will use such simplifications to prove the following proposition:

\begin{proposition}
$\mathcal{N}_{\sigma,E,\alpha} = \mathcal{N}_{\sigma,E,\alpha,ballistic} + \mathcal{N}_{\sigma,E,\alpha,reflect}$ where $\mathcal{N}_{\sigma,E,\alpha,ballistic}$ is a classical pseudo differential operator of order $-1$, elliptic on $\mathcal{M}'$, and $\mathcal{N}_{\sigma, E,\alpha,reflect}$ is an operator with $C_{0}^{\infty}(\Omega \times \Omega)$ Schwartz kernel. Thus there exists a classical pseudo differential operator $Q$ in $\Omega$ of order $1$ such that
\begin{equation}
Q\mathcal{N}_{\sigma, E,\alpha}f = f + Q\mathcal{N}_{\sigma,E,\alpha,reflect}f + \mathcal{S}_{1}f
\end{equation}
for any $f \in L^{2}(\Omega)$, where $\mathcal{S}_{1}$ is microlocally smoothing on $\mathcal{M}'$. \label{prop:normaldecompositioneuc}
\end{proposition}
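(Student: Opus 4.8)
The plan is to write $N_{\sigma,E,\alpha} = I_{\sigma,E,\alpha}^{*}I_{\sigma,E,\alpha}$ as a double sum $\sum_{j,k} N_{jk}$, where $N_{jk}$ is the operator whose Schwartz kernel couples the $j$-th and $k$-th segments of the broken rays in $\operatorname{supp}(\alpha)$. More precisely, using the representation $\alpha I_{\sigma,E}f(x,\theta) = \alpha(x,\theta)\sum_{j}\int_{\mathbb{R}_{+}}[w_{j}f](z_{j}+t\theta_{j},\theta_{j})\,dt$ from \eqref{eq:forwardoperator}, one computes the adjoint against $d\Sigma$ and obtains $N_{jk}f(y) = \int K_{jk}(y,y')f(y')\,dy'$ for kernels $K_{jk}$ that integrate over the set of broken rays passing appropriately through $y$ on the $j$-th leg and $y'$ on the $k$-th leg. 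The diagonal terms $j=k$ will carry the principal singularity, and the off-diagonal terms $j\neq k$ should be smoothing, because a broken ray passing through a fixed $y$ on segment $j$ and a fixed $y'$ on segment $k\neq j$ is constrained to lie in a lower-dimensional family once $y,y'$ are fixed and the reflection sequence is pinned down by $\alpha$ — i.e. generically no such broken ray exists, and when one does the phase is non-stationary. I would define $N_{\sigma,E,\alpha,ballistic} := \sum_{j} N_{jj}$ and $N_{\sigma,E,\alpha,reflect} := \sum_{j\neq k} N_{jk}$.

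For the diagonal piece, the key step is the change of variables alluded to in the statement: for each fixed $j$ and each $\alpha_{k}$ in the decomposition $\alpha = \sum_{k}\alpha_{k}$ (one reflection sequence per $k$), unfold the $j$-th leg of every broken ray in $\operatorname{supp}(\alpha_{k})$ into a straight line via the reflection composition $R_{0}\circ\cdots\circ R_{j}$, mapping the family of broken-ray $j$-legs to a family of honest lines in the augmented domain $\widetilde{\Omega}$. Under this unfolding, $N_{jj}$ becomes exactly (a piece of) the normal operator $\widetilde{X}_{\widetilde{w}}^{*}\widetilde{X}_{\widetilde{w}}$ of a weighted x-ray transform on $\widetilde{\Omega}$ with the smooth weight $\widetilde{w}$, composed with the linear isometries that undo the unfolding. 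By the standard computation — as in the Euclidean case recalled in the introduction and in (\cite{xraygeneric}, Section 3) — such a weighted normal operator is a classical $\Psi$DO of order $-1$ whose principal symbol is $c_{n}|\xi|^{-1}$ times $\int_{\mathbb{S}^{n-1}}|\widetilde{w}(y,\omega)|^{2}\,\delta(\omega\cdot\xi)\,d\omega$ (up to the usual constants), which is positive on the set of covectors conormal to some line in the unfolded family; pulling back through the (smooth, indeed affine) unfolding maps preserves both the $\Psi$DO class and the order, and the union over $j,k$ of the conormal covectors realized is exactly $\mathcal{M}'$, giving ellipticity there. I would carry out this symbol computation only schematically, quoting \cite{xraygeneric}, since it is routine once the change of variables is in place.

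For the off-diagonal piece, the argument is that $K_{jk}(y,y')$ with $j\neq k$ is given by an oscillatory-free integral over broken rays hitting both $y$ (on leg $j$) and $y'$ (on leg $k$); after unfolding, this is the kernel coupling two \emph{distinct} parallel-ish families of lines, and because a line through $\widetilde{y}$ in the $j$-th copy determines at most a codimension-$\geq 1$ condition on whether it also passes through the point $\widetilde{y}'$ in the $k$-th copy, the integration kernel is in fact a smooth, compactly supported function of $(y,y')$ — here one uses that $\operatorname{supp}(\alpha)$ contains only regular broken rays with a fixed bounded number of reflections (so $\widetilde{\Omega}$, $\widetilde{w}$, $\widetilde{\tau}_{-}$ are smooth and the geometry is nondegenerate away from $\partial E$), and that $K\Subset\Omega$ keeps everything away from the bad reflection locus. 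This yields the $C_{0}^{\infty}(\Omega\times\Omega)$ Schwartz kernel claim. Finally, ellipticity of $N_{\sigma,E,\alpha,ballistic}$ on $\mathcal{M}'$ gives a microlocal parametrix: choose a classical $\Psi$DO $Q$ of order $1$ with $QN_{\sigma,E,\alpha,ballistic} = \operatorname{Id} + \mathcal{S}_{1}$ microlocally on $\mathcal{M}'$ with $\mathcal{S}_{1}$ smoothing there, and then $QN_{\sigma,E,\alpha}f = QN_{\sigma,E,\alpha,ballistic}f + QN_{\sigma,E,\alpha,reflect}f = f + \mathcal{S}_{1}f + QN_{\sigma,E,\alpha,reflect}f$, with $QN_{\sigma,E,\alpha,reflect}$ still having smooth kernel since $Q$ applied to a smoothing operator is smoothing.

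The main obstacle I expect is the careful bookkeeping of the change of variables for the diagonal terms: one must verify that the Jacobian of the map taking the parametrization of broken rays (via $(x,\theta)$ on $\Gamma_{-}$ together with arclength) to the parametrization of unfolded lines in $\widetilde{\Omega}$ is smooth, nonvanishing, and matches the measure $d\Sigma\,dt$ with the natural measure $dy\,d\omega$ on line space, so that $N_{jj}$ is \emph{exactly} a conjugate of the weighted x-ray normal operator and not merely microlocally equivalent to one. This is precisely where the invariance $T^{*}(d\Sigma)=d\Sigma$ (from \cite{serge}) and the identity $\int_{\Gamma_{-}}\int_{\mathbb{R}_{+}}g(x+t\theta,\theta)\,dt\,d\Sigma = \int_{\Omega\times\mathbb{S}^{n-1}}g\,dx\,d\theta$ used in Lemma \ref{lemma:raybounded} do the work, and getting the constants and the weight $\widetilde{w}$ to appear correctly in the principal symbol is the one genuinely computational point that cannot be black-boxed.
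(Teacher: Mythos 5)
Your overall architecture matches the paper's: decompose $N_{\sigma,E,\alpha}$ into diagonal ($j_{1}=j_{2}$) and off-diagonal terms, handle the diagonal part by reducing to a weighted x-ray normal operator and quoting Lemma 2 of \cite{xraygeneric}, and finish with a microlocal parametrix for the elliptic part. (In fact the unfolding is not even needed for the diagonal terms: when $j_{1}=j_{2}=j$ the argument of $f$ is already $x+(t-\tau_{-}(x,\theta))\theta$, a point on the straight line through $x$, so the ballistic part is literally in the standard form with smooth weight $|\alpha_{k}(z_{-j},\theta_{-j})|^{2}w_{j}(x,\theta)w_{j}(x+t\theta,\theta)$.)

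The genuine gap is in your justification that the off-diagonal terms are smoothing. The heuristic you offer --- that a broken ray through $y$ on leg $j$ and $y'$ on leg $k\neq j$ is ``constrained to a lower-dimensional family,'' so ``generically no such broken ray exists'' --- is not the operative mechanism, and applied verbatim it would equally ``prove'' that the diagonal terms are smoothing: the set of lines through two fixed distinct points is also generically a single line, yet the diagonal kernel is genuinely singular of order $-(n-1)$ at $y'=y$. (There is also no oscillatory integral here, so ``the phase is non-stationary'' carries no content.) What actually happens, and what the paper proves, is that after the change of variables $\eta=S(\theta)$, $y'=R(x)+t\eta$ justified by Lemma \ref{lemma:reflectideuc}, the off-diagonal kernel takes the form $c(x,y')\,|y'-R(x)|^{-(n-1)}$ with $c$ smooth and $R=R_{j_{2}-j_{1}}\circ\cdots\circ R_{1}$ a nontrivial composition of reflections: it has exactly the same singularity structure as the diagonal kernel, only centered at $y'=R(x)$ instead of $y'=x$. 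The entire point is then Lemma \ref{lemma:reflectoutsidebound}: since $\Omega$ is convex and the reflecting hyperplanes support $\partial\Omega$, the reflected point $R(x)$ lies outside $\Omega$, so $|y'-R(x)|\geq\mathrm{dist}(K,\partial\Omega)>0$ for $y'\in K\Subset\Omega$, and the kernel is therefore $C_{0}^{\infty}$. Your proposal never isolates this geometric fact, and without it the $C_{0}^{\infty}(\Omega\times\Omega)$ claim for $N_{\sigma,E,\alpha,reflect}$ --- the one nontrivial assertion of the proposition --- is unsupported.
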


\subsection{Analyzing the Normal Operator $\mathcal{N}_{\sigma, E, \alpha}$ \label{sec:normalop}}
In order to decompose the normal operator in a useful way, it will be helpful to use the following functions: for $j \in \mathbb{Z}$ we define the variables $z_{j},\theta_{j}$ depending on $x \in \Omega$, $\theta \in \mathbb{S}^{n-1}$ by
\begin{equation*}
(z_{j}, \theta_{j}) := T^{j}(x-\tau_{-}(x,\theta)\theta,\, \theta)
\end{equation*}
Notice that $(z_{0},\theta_{0}) = (x - \tau_{-}(x,\theta)\theta, \theta)$.

Now recall from \cite{hubenthal2} that the adjoint of $\alpha I_{\sigma, E}$ is computed using a special change of variables as well as the invariance of the area form $d\Sigma$ on $\Gamma_{-}$ with respect to the billiard map $T$. For convenience, we repeat some of the details as follows. Observe that for any $g \in L^{2}(\Gamma_{-}, d\Sigma)$
\begin{align}
& \quad \int_{\Gamma_{-}}\alpha(x,\theta) [I_{\sigma,E}f](x,\theta) g(x,\theta)\,d\Sigma \notag\\
& = \int_{\Gamma_{-}}\alpha(x,\theta) g(x,\theta) \sum_{j=0}^{M}\int_{\rn{+}} w_{j}(z_{j}+t\theta_{j},\theta_{j})f(z_{j} + t \theta_{j})|\nu(x) \cdot \theta|\,dt\,dS(x)\,d\theta \label{eq:adjoint0}
\end{align}
We then make the change of variables $(x,\theta,t) \mapsto (y,\eta)$ where $(y,\eta) := (z_{j}(x,\theta) + t \theta_{j}(x,\theta), \theta_{j}(x,\theta))$, depending on each $j$ in the sum, which has the inverse
\begin{align}
(x,\theta) & = (z_{-j}(y,\eta), \theta_{-j}(y,\eta)) \notag \\
t & = \tau_{-}(y,\eta). \label{eq:reflectiondiffeo}
\end{align}
In order to change between $(x,\theta)$ and $(y,\eta)$, we note the identity
\begin{equation*}
(z_{m}(x,\theta) + s\theta_{m}(x,\theta), \theta_{m}(x,\theta)) = (z_{m-j}(y,\eta) + s \theta_{m-j}(y,\eta), \theta_{m-j}(y,\eta)).
\end{equation*}

At this point, we make the observation that given a fixed number of reflections $j \in \mathbb{N}$, each $(y,\eta) \in \Omega \times \mathbb{S}^{n-1}$ corresponds to a unique point $(x,\theta, t) \in \Gamma_{-} \times \mathbb{R}_{+}$ given by $(x,\theta, t) = (T^{-j}(y - \tau_{-}(y,\eta)\eta, \eta), \, \tau_{-}(y,\eta))$. Thus (\ref{eq:reflectiondiffeo}) is a diffeomorphism. We apply the change of variables in two steps. First we change coordinates from $(x,\theta)$ to $(z,\eta) = T^{-j}(x,\theta)$ in (\ref{eq:adjoint0}) and use the fact $T$ preserves the area form $|\nu(x) \cdot \theta| \,dS(x)\,d\theta$ to get
\begin{align}
& \int_{\Gamma_{-}}\alpha(T^{j}(z,\eta)) g(T^{j}(z,\eta)) \nonumber \\
& \cdot \sum_{j=0}^{M}\int_{\rn{+}} w_{j}(z+t\eta,\eta)f(z + t \eta)|\nu(z) \cdot \eta|\,dt\,dS(z)\,d\eta. \label{eq:adjoint1}
\end{align}
Finally we make the change of variables $y = z + t\eta$ to convert the integration against $dt\,dS(z)$ into an integration over $\Omega$, see (\cite{inversesource}, Theorem 1(b)). This yields
\begin{align}
& \sum_{j=0}^{M} \int_{\Omega \times \mathbb{S}^{n-1}}\alpha(T^{j}(y - \tau_{-}(y,\eta)\eta, \, \eta)) g(T^{j}(y - \tau_{-}(y, \,\eta)\eta,\,\eta)) \nonumber \\
& \hspace{2cm} \cdot w_{j}(y,\eta)f(y)dy\,d\eta. \label{eq:adjoint2}
\end{align}
Thus
\begin{equation}
\left( \alpha I_{\sigma, E} \right)^{*}g(x) = \sum_{j=0}^{N} \int_{\mathbb{S}^{n-1}}[\alpha g](z_{j}(x,\theta), \theta_{j}(x,\theta))w_{j}(x,\theta)\,d\theta. \label{eq:adjoint}
\end{equation}

From now on we take $\alpha$ to be a sum of cutoffs $\alpha_{k}$, $k =1, \ldots, m$, such that each $\alpha_{k}$ is supported on a set of regular broken rays which reflect off the same sequence of hyperplanes defining $\partial \Omega$. Moreover, each regular broken ray in the support of $\alpha_{k}$ will have $0 \leq M_{k} \leq M_{max}$ reflections. The normal operator $\mathcal{N}_{\sigma, E, \alpha} = (\alpha I_{\sigma, E})^{*}(\alpha I_{\sigma, E})$ for convex, piecewise smooth Euclidean domains is then
\begin{align*}
& \mathcal{N}_{\sigma,E,\alpha}f(x) \\
& = \sum_{k=1}^{m}\sum_{j_{1}=0}^{M_{k}}\sum_{j_{2}=0}^{M_{k}}\int_{\mathbb{S}^{n-1}}\int_{\mathbb{R}_{+}} |\alpha_{k}(z_{-j_{1}}, \theta_{-j_{1}})|^{2} w_{j_{1}}\left( x,\theta \right) w_{j_{2}}\left(z_{j_{2}-j_{1}}+ t\theta_{j_{2}-j_{1}}, \theta_{j_{2}-j_{1}}\right)\\
& \cdot f\left(z_{j_{2}-j_{1}}+ t\theta_{j_{2}-j_{1}}\right)\, dt\,d\theta.
\end{align*}
We pick off the ballistic terms where $j_{1}=j_{2}$ to decompose $\mathcal{N}_{\sigma,E,\alpha}f$ as a sum
\begin{equation*}
\mathcal{N}_{\sigma,E,\alpha} = \mathcal{N}_{\sigma,E,\alpha, ballistic} + \mathcal{N}_{\sigma,E,\alpha,reflect}
\end{equation*}
with
\begin{align}
& \mathcal{N}_{\sigma,E,\alpha,ballistic}f(x)\notag \\
& = \sum_{k=1}^{m}\sum_{j=0}^{M_{k}}\int_{\mathbb{S}^{n-1}}\int_{\mathbb{R}_{+}} |\alpha_{k}(z_{-j},\, \theta_{-j})|^{2} w_{j}\left( x,\theta \right) w_{j}\left(x + (\tau_{-}(x,\theta)+t)\theta, \theta\right)\notag \\
& \qquad \cdot f\left(x + (\tau_{-}(x,\theta)+t)\theta \right)\, dt\,d\theta \notag \\
& = \sum_{k=1}^{m}\sum_{j=0}^{M_{k}}\int_{\mathbb{S}^{n-1}}\int_{\mathbb{R}} |\alpha_{k}(z_{-j}, \, \theta_{-j})|^{2} w_{j}\left( x,\theta \right) w_{j}\left(x + t\theta, \theta\right)f\left(x + t\theta \right)\, dt\,d\theta \notag \\
& = \sum_{k=1}^{m}\sum_{j=0}^{M_k}\int_{\mathbb{S}^{n-1}}\int_{\rn{+}} \left[ |\alpha_{k}(z_{-j}(x,\cdot), \, \theta_{-j}(x,\cdot))|^{2} w_{j}\left( x,\cdot \right) w_{j}\left(x + t\theta, \cdot \right)\right]_{even}(\theta) \notag \\
& \qquad \cdot f\left(x + t\theta \right)\, dt\,d\theta \label{eq:normalballisticform0}
\end{align}
and
\begin{align}
\mathcal{N}_{\sigma,E,\alpha,reflect}f(x) & = \sum_{k=1}^{m}\sum_{j_{1}=0}^{M_{k}}\sum_{j_{2}=0, j_{2} \neq j_{1}}^{M_{k}}\int_{\mathbb{S}^{n-1}}\int_{\mathbb{R}_{+}} |\alpha_{k}(z_{-j_{1}}, \theta_{-j_{1}})|^{2} \notag\\
& \cdot w_{j_{1}}\left( x,\theta \right) w_{j_{2}}\left(z_{j_{2}-j_{1}}+ t\theta_{j_{2}-j_{1}}, \theta_{j_{2}-j_{1}}\right) \notag\\
& \cdot f\left(z_{j_{2}-j_{1}}+ t\theta_{j_{2}-j_{1}}\right)\, dt\,d\theta.\label{eq:normalreflectform0}
\end{align}
The notation $\left[ g(x,\cdot) \right]_{even}(\theta)$ is the even part of $g$ in the variable $\theta$, defined by $\left[ g(x,\cdot)\right]_{even}(\theta) = \frac{1}{2}\left[ g(x,\theta) + g(x,-\theta) \right]$. 

By Lemma 2 of \cite{xraygeneric} $\mathcal{N}_{\sigma,E,\alpha, ballistic}$ in (\ref{eq:normalballisticform0}) is a classical pseudo differential operator of order $-1$ with principal symbol
\begin{equation}
a_{0}(x,\xi) = 2\pi \sum_{k=0}^{m}\sum_{j=0}^{M_{k}}\int_{\theta \in \mathbb{S}^{n-1}, \, \theta \cdot \xi = 0}|\alpha_{k}(z_{-j}, \theta_{-j})|^{2} \left| w_{j}\left( x,\theta \right)\right|^2 \,d\theta
\end{equation}
The bound on the number of reflections $M_{max}$ ensures that the integral kernel has a positive lower bound on the set of all covectors $(x,\xi)$ such that it is non vanishing at some $\theta \in \mathbb{S}^{n-1}$ normal to $\xi$. Clearly, $\mathcal{N}_{\sigma,E,\alpha,ballistic}$ is also elliptic on the set $\mathcal{M}'$.

\subsection{A Novel Change of Coordinates to Analyze $\mathcal{N}_{\sigma, E, \alpha, reflect}$ \label{sec:changevar}}
In order to have a more complete understanding of $\mathcal{N}_{\sigma,E,\alpha}$, we must understand $\mathcal{N}_{\sigma, E, \alpha, reflect}$, which at first sight seems to be a difficult integral to simplify given the geometry. However, the assumption that all reflections occur on hyperplanes does help immensely. In this section we establish two useful intermediate results that will allow us ultimately to show the smoothing behavior of $\mathcal{N}_{\sigma, E, \alpha}$. Lemma \ref{lemma:reflectideuc} shows that the change of variables $(z, \eta) = T^{-j}(x - \tau_{-}(x,\theta)\theta, \theta)$, $y = z + t\eta$ is equivalent to applying simple changes of variables to $x$ and $\theta$, separately. Lemma \ref{lemma:reflectoutsidebound} then establishes a lower bound on the Jacobian factor introduced by such a change of variables, which ensures that the overall integration is nonsingular.

Recall the operators $R_{\xi,a}:\rn{n} \to \rn{n}$ and $S_{\xi}:\rn{n} \to \rn{n}$, the reflections across the affine plane $(x-a) \cdot \xi = 0$ and linear plane $x \cdot \xi = 0$, respectively. When dealing with an indexed sequence of hyperplanes $\{(\xi_{1}, a_{1}), \ldots, (\xi_{N}, a_{N}) \}$, we also write $R_{j} = R_{\xi_{j},a_{j}}$ and $S_{j} = S_{\xi_{j}}$. We have the following useful result:
\begin{lemma}
Let $x \in \Omega$. Let $V \subset \mathbb{S}^{n-1}$ be an open set such that the sequence of hyperplanes $\{(\xi_{1},a_{1}), \ldots, (\xi_{N},a_{N})\}$ corresponding to $T^{j}(x-\tau_{-}(x,\theta)\theta, \theta)$ are the same for $1 \leq j \leq N$ and for all $\theta \in V$. Then the lines
\begin{equation*}
\{z_{j}+ t\theta_{j} \, | \, t \in \mathbb{R} \} = \{ \pi_{1}\circ T^{j}(x-\tau_{-}(x,\theta)\theta, \theta) + t\pi_{2} \circ T^{j}(x-\tau_{-}(x,\theta)\theta,\theta) \, | \, t \in \mathbb{R} \}
\end{equation*}
and
\begin{equation*}
\{ R_{j} \circ R_{j-1} \circ \cdots \circ R_{1}(x) + t S_{j} \circ S_{j-1}\circ \cdots \circ S_{1}(\theta) \, | \, t \in \mathbb{R} \}
\end{equation*}
coincide.\label{lemma:reflectideuc}
\end{lemma}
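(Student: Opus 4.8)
The plan is to prove this by induction on the number of reflections $j$, showing at each step that the billiard map $T$ applied to a ray is exactly mirrored by composing a further affine reflection $R_j$ on the position and the corresponding linear reflection $S_j$ on the direction. The key observation is that the entire billiard trajectory $\gamma_{x,\theta}$, when its segments are successively unfolded by the maps $R_1, R_1\circ R_2, \ldots$ (reflecting across the flat hyperplanes $P_1,\ldots,P_N$), becomes a single straight line in $\rn n$ — this is exactly the geometric fact already noted in Section \ref{sec:mainresults} that $\widetilde\gamma_{x,\theta}$ is a straight line segment. So the statement is essentially a bookkeeping identity tracking how the $j$-th segment of $\gamma_{x,\theta}$ and its endpoint/direction relate to the unfolded picture.

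First I would set up the induction carefully. Let $(z_0,\theta_0) = (x-\tau_-(x,\theta)\theta,\theta)$ and $(z_j,\theta_j) = T^j(z_0,\theta_0)$ be the successive reflection points and outgoing directions, so that $z_j \in P_j$ for $1\le j\le N$ and the $j$-th segment of the broken ray is $\{z_j + t\theta_j : t\ge 0\}$ (with the preceding segment being $\{z_{j-1}+t\theta_{j-1}\}$, arriving at $z_j$). The reflection law (property (c) of a broken ray, together with $\xi_j = \nu(x_{0,j})$ being the unit normal of $P_j$) gives precisely $\theta_j = S_j(\theta_{j-1})$ and, since $z_j$ is the point where the line from $z_{j-1}$ in direction $\theta_{j-1}$ meets the hyperplane $P_j$, also $R_j(z_{j-1}) = $ the continuation point so that the reflected and unfolded rays agree. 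The base case $j=0$ is trivial: the line $\{z_0 + t\theta_0\}$ is just the line through $x$ in direction $\theta$ (no $R$'s or $S$'s applied, identifying $R_0 = S_0 = \mathrm{Id}$), and $z_0 = x - \tau_-\theta$ lies on this line, which is the same line as $\{x + t\theta\}$; since both lines in the statement are full lines ($t\in\mathbb R$), they coincide.

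For the inductive step, suppose the line $\{z_{j-1} + t\theta_{j-1} : t\in\mathbb R\}$ equals $\{R_{j-1}\circ\cdots\circ R_1(x) + t\, S_{j-1}\circ\cdots\circ S_1(\theta) : t\in\mathbb R\}$. Apply the reflection $R_j$ across $P_j$ to both descriptions. On one hand, $R_j$ fixes $P_j$ pointwise and maps the incoming segment onto the outgoing one, so $R_j$ sends the line $\{z_{j-1}+t\theta_{j-1}\}$ to the line $\{z_j + t\theta_j\}$ — here one uses that $z_j \in P_j$ is fixed by $R_j$ and that $R_j$ acts on the direction vector $\theta_{j-1}$ exactly as its linear part $S_j$, giving $R_j\theta_{j-1} = S_j\theta_{j-1} = \theta_j$. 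On the other hand, applying $R_j$ to the right-hand description and using that the linear part of $R_j$ is $S_j$, the point becomes $R_j\circ R_{j-1}\circ\cdots\circ R_1(x)$ and the direction becomes $S_j\circ S_{j-1}\circ\cdots\circ S_1(\theta)$. Since $R_j$ is an affine bijection, it maps one line to the other bijectively, and these two image lines coincide, completing the induction. Throughout, the hypothesis that $V$ is chosen so the hyperplane sequence is constant for all $\theta\in V$ guarantees that the maps $R_j, S_j$ are independent of $\theta$, so the identity holds uniformly.

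The only mild subtlety — the ``main obstacle,'' though it is more a point requiring care than a genuine difficulty — is verifying that $R_j$ genuinely maps the incoming ray-line onto the outgoing ray-line, i.e. that the unfolding reflection and the billiard reflection law are consistent. This amounts to checking two things: (i) the reflection point $z_j$ is fixed by $R_j$ (immediate, since $z_j\in P_j$ and $R_j$ is reflection across $P_j$), and (ii) the outgoing direction $\theta_j$ from the optics law (property (c)) equals $R_j$ applied to the incoming direction $\theta_{j-1}$ as a \emph{vector}, which is $S_j\theta_{j-1}$ because the linear part of the affine reflection $R_{\xi_j,a_j}(y) = y + 2\xi_j(a_j - y)\cdot\xi_j$ is exactly $v\mapsto v - 2(v\cdot\xi_j)\xi_j = S_{\xi_j}(v)$. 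Comparing this with the reflection formula in property (c), with $\nu(\gamma_{x,\theta,j}(L_j)) = \xi_j$, shows they are identical. Once this consistency is in hand, the rest is the routine induction sketched above.
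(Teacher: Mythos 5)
Your proof is correct and follows essentially the same route as the paper's: an induction on the reflection index $j$, verifying at each step that the unfolding reflection $R_j$ (fixing $z_j \in P_j$ and acting on directions by its linear part $S_j$) carries the incoming line onto the outgoing line, consistently with the billiard reflection law. The only difference is cosmetic --- the paper verifies the base case by explicitly computing the intersection point of each line with the hyperplane $(x-a_1)\cdot\xi_1=0$, whereas you argue more abstractly from the fact that $R_j$ is an affine bijection fixing $z_j$; both yield the same inductive step.
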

\begin{proof}
We proceed inductively. First note that by definition $S_{1}(\theta) = \theta - 2(\theta \cdot \xi_{1})\xi_{1}$ which coincides with $\pi_{2}\circ T(x - \tau_{-}(x,\theta)\theta, \theta)$. It remains to show that $R_{1}(x)+tS_{1}(\theta)$ and $\pi_{1}\circ T(x - \tau_{-}(x,\theta)\theta, \theta) + t\pi_{2}\circ T(x - \tau_{-}(x,\theta)\theta, \theta)$ have a common point. In particular, we will show that they have the same point of intersection with the plane $(x-a_{1})\cdot \xi_{1} = 0$. Note that from the definition of the billiard map, the intersection point of the line $\{ \pi_{1} \circ T^{j}(x- \tau_{-}(x,\theta)\theta, \theta) + t \pi_{2} \circ T^{j}(x-\tau_{-}(x,\theta)\theta,\theta) \, | \, t \geq 0\}$ with the plane $\{x \, | \, (x - a_{j})\cdot \xi_{j} = 0\}$ is the same as that for the line $\{ \pi_{1} \circ T^{j-1}(x-\tau_{-}(x,\theta)\theta,\theta) + t \pi_{2}\circ T^{j-1}(x-\tau_{-}(x,\theta)\theta,\theta) \, |\, t \geq 0\}$.

For the base case, we first note that the intersection point of the line $x + t\theta$ with the plane $(x - a_{1})\cdot \xi_{1} = 0$ is given by
\begin{equation*}
z_{1} = x + \frac{(a_{1} - x)\cdot \xi_{1}}{\theta \cdot \xi_{1}} \theta.
\end{equation*}
Now observe by the definition of $R_{1}$ and $S_{1}$ that
\begin{equation*}
R_{1}(x) + tS_{1}(\theta) = x + 2\left[ (a_{1}-x) \cdot \xi_{1}\right]\xi_{1} + t\left[ \theta - 2(\theta\cdot \xi_{1})\xi_{1}\right].
\end{equation*}
Furthermore, the point of intersection of this line with the plane $(a_{1} - x)\cdot \xi_{1} = 0$ is given by
\begin{align*}
& x + 2\left[ (a_{1}-x)\cdot \xi_{1} \right]\xi_{1} + \frac{(a_{1}-x)\cdot \xi_{1} - 2(a_{1}-x)\cdot \xi_{1}}{\theta \cdot \xi_{1} - 2(\theta \cdot \xi_{1})}\left( \theta - 2(\theta\cdot \xi_{1})\xi_{1}\right)\\
& = x + 2\xi_{1}(a_{1}-x)\cdot \xi_{1} + \frac{(a_{1}-x)\cdot \xi_{1}}{\theta \cdot \xi_{1}}\left( \theta - 2(\theta \cdot \xi_{1})\xi_{1} \right)\\
& = x + \frac{(a_{1}-x) \cdot \xi_{1}}{\theta \cdot \xi_{1}}\theta.
\end{align*}
So the claim holds for $j = 1$.

Now assuming it holds for some $1 \leq k \leq N$, we note that computing the reflection point on the plane $(x-a_{k+1}) \cdot \xi_{k+1} = 0$ of $R_{k+1}\circ \cdots \circ R_{1}(x) + tS_{k+1} \circ \cdots \circ S_{1}(\theta)$ is equivalent to computing that of $R_{k+1}(z_{k}) + tS_{k+1}(\theta_{k})$. By induction, such a reflection point coincides with that of 
\begin{align*}
& \quad \pi_{1}\circ T(z_{k}, \theta_{k}) + t\pi_{2} \circ T(z_{k}, \theta_{k}) \\
= & \quad \pi_{1} \circ T^{k+1}(x -\tau_{-}(x,\theta)\theta,\theta) + t \pi_{2}\circ T^{k+1}(x-\tau_{-}(x,\theta)\theta,\theta).
\end{align*}
This completes the proof.\end{proof}

\begin{lemma}
Let $K \Subset \Omega$ with $\mathrm{dist}(K, \partial \Omega) \geq \epsilon > 0$. Let $\gamma = \bigcup_{j=0}^{N}\gamma_{j}$ be a regular broken ray consisting of line segments $\gamma_{j}$ parametrized such that
\begin{equation*}
(\gamma_{j}(0), \dot{\gamma}_{j}(0)) = T^{j}(x - \tau_{-}(x,\theta)\theta, \theta).
\end{equation*}
Suppose $(x,\theta) = (\gamma_{k}(t_{0}), \gamma_{k}'(t_{0}))$ for some $0 \leq k < N$ and $0 \leq t \leq L(\gamma_{k})$.

Then for any $k < l \leq N$, we have
\begin{equation}
|R_{l} \circ R_{l-1} \circ \cdots \circ R_{k+1}(x) - y | \geq \epsilon
\end{equation}
for all $y \in K$.
\label{lemma:reflectoutsidebound}
\end{lemma}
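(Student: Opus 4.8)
The plan is to use that $R_l\circ R_{l-1}\circ\cdots\circ R_{k+1}$ is a Euclidean isometry and to peel off its outermost factor. Write $\Phi:=R_{l-1}\circ R_{l-2}\circ\cdots\circ R_{k+1}$, with the convention $\Phi=\mathrm{Id}$ when $l=k+1$, so that the point in question is $R_l(\Phi(x))$. Since $\Omega$ is convex and the flat face of $\partial\Omega$ carrying the $l$-th reflection point $x_{0,l}$ lies in $P_l$, the hyperplane $P_l$ supports $\Omega$: there is an open half-space $H_l^+$ bounded by $P_l$ with $\Omega\subset H_l^+$, and $R_l$ fixes $P_l$ pointwise while interchanging $H_l^+$ with the opposite half-space $H_l^-$. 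I would first record two elementary facts. (i) For every $y\in K$ and every $j$, $\mathrm{dist}(y,P_j)\ge\mathrm{dist}(y,\partial\Omega)\ge\epsilon$: the orthogonal projection $y^\ast$ of $y$ onto $P_j$ is not in $\Omega$ (as $P_j\cap\Omega=\emptyset$), so the segment $[y,y^\ast]$ meets $\partial\Omega$, whence $\mathrm{dist}(y,\partial\Omega)\le|y-y^\ast|=\mathrm{dist}(y,P_j)$. (ii) Consequently any point of $\overline{H_l^-}$ lies at distance $\ge\epsilon$ from $K$, because the segment joining it to a point $y\in K\subset H_l^+$ must cross $P_l$. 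Granting these, it suffices to prove the purely geometric claim that $\Phi(x)\in\overline{H_l^+}$, i.e. that $\Phi$ carries $x$ back to the side of $P_l$ on which $\Omega$ sits; then $R_l(\Phi(x))\in\overline{H_l^-}$ and (ii) finishes the proof.

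For the geometric claim I would unfold the tail $\gamma_k,\gamma_{k+1},\ldots,\gamma_N$ of the broken ray, exactly as in the construction of $\widetilde{\Omega}$ in \S\ref{sec:mainresults} or in the proof of Lemma \ref{lemma:reflectideuc}. Put $U_j:=R_{k+1}\circ R_{k+2}\circ\cdots\circ R_j$ for $k\le j\le N$, with $U_k:=\mathrm{Id}$. Using the reflection law at each $P_j$ (which turns a billiard reflection into a straight continuation after reflecting), one checks inductively that the segments $U_j(\gamma_j)$, $k\le j\le N$, are consecutive non-overlapping pieces of a single straight segment $\Lambda$, with $\gamma_k=U_k(\gamma_k)$ as the initial piece and with $U_{j-1}(x_{0,j})=U_j(x_{0,j})$ the common endpoint of $U_{j-1}(\gamma_{j-1})$ and $U_j(\gamma_j)$; moreover the unfolding preserves the direction of travel along the broken ray, so that moving along $\Lambda$ away from $\gamma_k$ we traverse in turn $U_{k+1}(\gamma_{k+1}),U_{k+2}(\gamma_{k+2}),\ldots$, and the last-reached endpoint of the piece $U_{l-1}(\gamma_{l-1})$ is the image of $x_{0,l}$. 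Now $\Phi=U_{l-1}^{-1}$ is an isometry carrying $\Lambda$ onto the line through $\gamma_{l-1}$, carrying the piece $U_{l-1}(\gamma_{l-1})$ onto $\gamma_{l-1}$, fixing the travel direction, and sending the image of $x_{0,l}$ back to $x_{0,l}\in P_l$. Since every point of $\gamma_k$ lies on $\Lambda$ on the $\gamma_k$-side of that endpoint, $\Phi(x)$ lands on the line through $\gamma_{l-1}$ on the $\gamma_{l-1}$-side of $x_{0,l}$, i.e. in $\overline{H_l^+}$. Concretely, parametrizing $\Lambda$ by arc length $r$ from $\gamma_k(0)$ in the direction of travel, $x=\gamma_k(t_0)$ sits at $r=t_0\le L(\gamma_k)\le\rho_{l-1}$, where $\rho_{l-1}:=L(\gamma_k)+\cdots+L(\gamma_{l-1})$ is the parameter of the image of $x_{0,l}$, and $\Phi(\gamma_k(0)+rw)=x_{0,l}+(r-\rho_{l-1})v$ with $w$ the direction of increasing $r$ and $v$ the direction of $\gamma_{l-1}$, which satisfies $v\cdot\nu(x_{0,l})>0$; hence $(\Phi(x)-x_{0,l})\cdot\nu(x_{0,l})=(t_0-\rho_{l-1})\,(v\cdot\nu(x_{0,l}))\le0$.

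The main obstacle is the bookkeeping in the second step: correctly identifying $R_{l-1}\circ\cdots\circ R_{k+1}$ as the inverse of the partial unfolding map $U_{l-1}$, and verifying that it returns $\Lambda$ to the line through $\gamma_{l-1}$ with the right orientation. No genuinely new idea beyond Lemma \ref{lemma:reflectideuc} is needed, only careful indexing: the computation behind that lemma is precisely the one required, re-indexed to start from the segment $\gamma_k$ rather than from the boundary. Finally I would dispatch the degenerate cases: when $l=k+1$ the map $\Phi$ is the identity and $\Phi(x)=x\in\overline{\Omega}\subset\overline{H_l^+}$ immediately, and when $x$ is an endpoint of $\gamma_k$ (so $t_0\in\{0,L(\gamma_k)\}$) the inequality $t_0\le\rho_{l-1}$ still holds. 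Regularity of the broken ray is used only to guarantee that all reflection points lie in the relative interiors of the flat faces, so that $P_l$ meets $\gamma_{l-1}$ transversally and $v\cdot\nu(x_{0,l})\neq 0$, which is exactly what legitimizes the "which side of $P_l$" argument.
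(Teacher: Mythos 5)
Your proof is correct and follows essentially the same strategy as the paper's: unfold the tail of the broken ray via the reflections, track the arc-length parameter of $x$, and conclude that $R_{l}\circ\cdots\circ R_{k+1}(x)$ lies outside $\Omega$ (the paper writes it directly as $\gamma_{l}$ evaluated at a negative parameter, which is the same point you place in the closed half-space beyond the supporting hyperplane $P_{l}$), so its distance to $K$ is at least $\mathrm{dist}(K,\partial\Omega)\geq\epsilon$. Your version merely makes the supporting-hyperplane step explicit where the paper appeals to convexity in one line.
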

\begin{proof}In the case of a single reflection (i.e. $l-k= 1$), it is easy to see by the way $\gamma_{j}$ are parametrized that $R_{l}(x) = R_{k+1}(x) = \gamma_{k+1}(-\tau_{+}(x,\theta))$. Note that we have extended each $\gamma_{j}$ to a full line without shifting the parametrization. Let $\{L_{0}, L_{1}, L_{2}, \ldots, L_{N}\}$ be the lengths of $\{\gamma_{0} \cap \Omega, \ldots, \gamma_{N} \cap \Omega\}$. For any $k < l \leq N$, we then have by the parametrizations of $\gamma_{l}$ that
\begin{equation}
R_{l} \circ \cdots \circ R_{k+1}(x) = \gamma_{l}\left( -\tau_{+}(x,\theta) - \sum_{j=k+1}^{l-1}L_{j} \right). \label{eq:reflectedxoutside}
\end{equation}

Since each $\gamma_{j}$ is pointing inward on the boundary $\partial \Omega$ at $t=0$ and $\Omega$ is convex, we have by (\ref{eq:reflectedxoutside}) that $R_{l} \circ \cdots \circ R_{k+1}(x)$ lies outside of $\Omega$. Therefore
\begin{equation*}
|R_{l} \circ \cdots \circ R_{k+1}(x) - y |  \geq d(y, \partial \Omega) \geq \epsilon.
\end{equation*}
\end{proof}

\begin{figure}
\centering
\def \svgwidth{0.8\columnwidth}
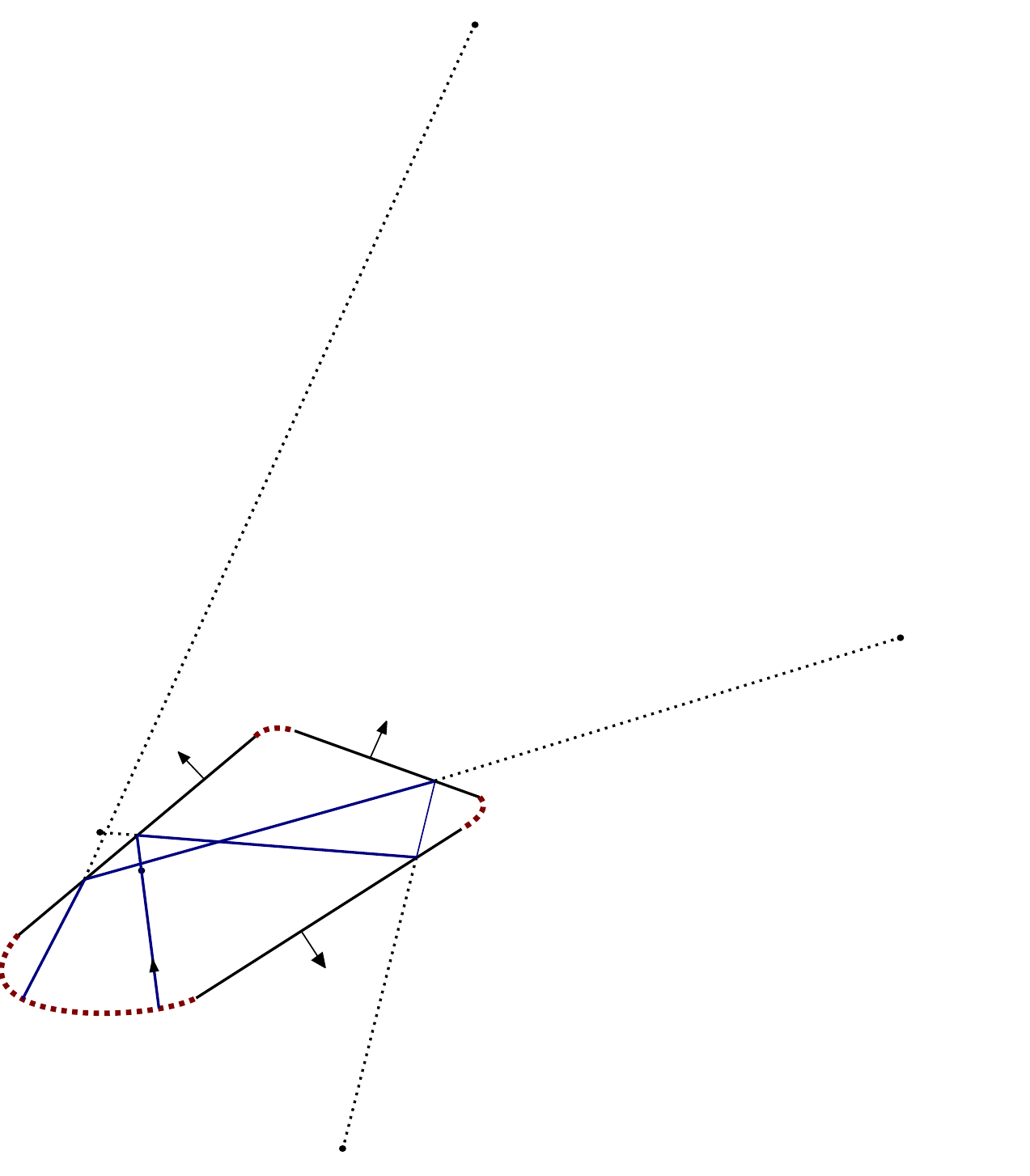
\caption{Illustration of change of variables used to simplify $\mathcal{N}_{\sigma, E, \alpha, reflect}$ in the proof of Proposition \ref{prop:normaldecompositioneuc}. \label{fig:reflectionscheme}}
\end{figure}

We are now ready to prove the main result of this section.
\begin{proof}[Proof of Proposition \ref{prop:normaldecompositioneuc}]
It remains to analyze the part $\mathcal{N}_{\sigma, E, \alpha, reflect}$ of the normal operator. Choose a single term in the sum (\ref{eq:normalreflectform0}) that defines\\ $\mathcal{N}_{\sigma, E,\alpha,reflect}$. That is, fix $k$ and fix $j_{1} \neq j_{2}$. For now we will assume that $j_{2} > j_{1}$, although the other case is virtually the same due to the reversability of the billiard map. We have an integral of the form
\begin{align*}
I_{k,j_{1},j_{2}} & = \int_{\mathbb{S}^{n-1}}\int_{\mathbb{R}_{+}}|\alpha_{k}(z_{-j_{1}}, \theta_{-j_{1}})|^{2}\\
& \cdot w_{j_{1}}\left(x,\theta\right)w_{j_{2}}(z_{j_{2}-j_{1}} + t\theta_{j_{2}-j_{1}}, \theta_{j_{2}-j_{1}})f(z_{j_{2}-j_{1}} + t\theta_{j_{2}-j_{1}})\,dt \, d\theta.
\end{align*}
Using Lemma \ref{lemma:reflectideuc} we have that integration of $f$ over the line $z_{j_{2}-j_{1}} + t\theta_{j_{2}-j_{1}}$ is the same as integration of $f$ over the line
\begin{equation*}
R(x) + tS(\theta) := R_{j_{2}-j_{1}}\circ \cdots \circ R_{1}(x) + t S_{j_{2}-j_{1}}\circ \cdots \circ S_{1}(\theta).
\end{equation*}
We then make the change of variables $\eta = S(\theta)$ to get
\begin{align*}
I_{k,j_{1},j_{2}} &= \int_{\eta \in \mathbb{S}^{n-1}}\int_{t \in \mathbb{R}_{+}}|\alpha_{k}\left(z_{-j_{1}}(x, S^{-1}(\eta)), \theta_{-j_{1}}(x,S^{-1}(\eta)\right)|^{2}\\
& \cdot w_{j_{1}}\left(x,S^{-1}(\eta)\right)w_{j_{2}}(R(x) + t\eta, \eta)f(R(x) + t\eta)\,dt\,d\eta.
\end{align*}
Finally, we make the change to cartesian coordinates by letting $y = R(x) + t\eta$, so that $\eta = \widehat{y - R(x)}$ and $|R(x)-y|^{-n+1}dy = dt\,d\eta$. We obtain
\begin{align*}
I_{k,j_{1},j_{2}} & = \int_{\rn{n}}|\alpha_{k}\left(z_{-j_{1}}\left(x, \, S^{-1}\left(\widehat{y-R(x)}\right)\right), \theta_{-j_{1}}\left(x, \, S^{-1}\left(\widehat{y-R(x)}\right)\right)\right)|^{2}\\
& \cdot w_{j_{1}}\left(x,S^{-1}\left(\widehat{y-R(x)}\right)\right)w_{j_{2}}\left(y, \widehat{y-R(x)}\right)f(y)|y-R(x)|^{-n+1}\,dy.
\end{align*}
By Lemma \ref{lemma:reflectoutsidebound}, the denominator $|y-R(x)|^{n-1}$ is strictly bounded away from $0$, and hence $I_{k,j_{1},j_{2}}$ is a smooth function of $x$.

Finally, let $Q$ be a microlocal parametrix for $\mathcal{N}_{\sigma, E, \alpha, ballistic}$, which is a pseudo differential operator of order $1$ that is elliptic on $\mathcal{M}'$. Then by definition
\begin{equation*}
Q\mathcal{N}_{\sigma, E,\alpha}f = Q\mathcal{N}_{\sigma, E, \alpha, ballistic}f + Q\mathcal{N}_{\sigma, E, \alpha, reflect}f = f + \mathcal{S}_{1}f + Q\mathcal{N}_{\sigma, E, \alpha,reflect}f,
\end{equation*}
where $\mathcal{S}_{1}$ is microlocally smoothing on $\mathcal{M}'$.
\end{proof}

\begin{remark}If we restrict to $f \in L^{2}(K)$ where $K \Subset \mathcal{M}$, $\mathcal{M}$ being the visible set, then $\mathcal{S}_{1}$ will be a smoothing operator on $K$. Furthermore, we can use the stability estimate of \S \ref{sec:stability} to establish injectivity of $I_{\sigma, E,\alpha}$ for $C^{2}$ perturbations of $\sigma$ from constant.
\end{remark}

\begin{proposition}Under the conditions of Theorem \ref{thm:stabilityestimate}, without assuming that $I_{\sigma, E, \alpha}$ is injective,
\begin{enumerate}
\item[(a)] one has the a priori estimate
\begin{equation*}
\|f\|_{L^{2}(K)} \leq C\|\mathcal{N}_{\sigma, E, \alpha}f\|_{H^{1}(\Omega)} + C_{s}\|f\|_{H^{-s}(\Omega)}, \quad \forall s;
\end{equation*}
\item[(b)]  $\mathrm{Ker}{\, I_{\sigma, E, \alpha}}$ is finite dimensional and included in $C^{\infty}(K)$.
\end{enumerate} \label{prop:aprioristability}
\end{proposition}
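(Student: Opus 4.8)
The plan is to read everything off from the parametrix identity in Proposition~\ref{prop:normaldecompositioneuc}, exploiting two facts: $N_{\sigma,E,\alpha,reflect}$ has a $C_{0}^{\infty}(\Omega\times\Omega)$ Schwartz kernel, and — as in the remark above — $\mathcal{S}_{1}$ acts as a genuinely smoothing operator once we restrict to functions supported in $K\Subset\mathcal{M}$, because every covector based in the visible set $\mathcal{M}$ lies in $\mathcal{M}'$, so $\mathrm{WF}(f)\subset\mathcal{M}'$ and the microlocal smoothing of $\mathcal{S}_{1}$ on $\mathcal{M}'$ becomes ordinary smoothing on $K$. I would begin by fixing once and for all cutoff functions equal to $1$ on a neighborhood of $K$ so that the order-$1$ parametrix $Q$ is properly supported and the composition $QN_{\sigma,E,\alpha,reflect}$ makes sense as an operator with $C^{\infty}$ kernel near $K$.

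For part (a) I would rewrite the identity of Proposition~\ref{prop:normaldecompositioneuc} as
\[
f = QN_{\sigma, E,\alpha}f - QN_{\sigma,E,\alpha,reflect}f - \mathcal{S}_{1}f .
\]
Since $Q$ is a classical pseudodifferential operator of order $1$, it maps $H^{1}(\Omega)$ continuously into $L^{2}$ locally, giving $\|QN_{\sigma,E,\alpha}f\|_{L^{2}(K)}\le C\|N_{\sigma,E,\alpha}f\|_{H^{1}(\Omega)}$. For the remaining two terms: $N_{\sigma,E,\alpha,reflect}\colon\mathcal{E}'(\Omega)\to C_{0}^{\infty}(\Omega)$ continuously, hence it maps $H^{-s}(\Omega)\to C^{\infty}$ for every $s$, and precomposing with (sic: composing with) the order-$1$ operator $Q$ preserves smoothing, so $\|QN_{\sigma,E,\alpha,reflect}f\|_{L^{2}(K)}\le C_{s}\|f\|_{H^{-s}(\Omega)}$; likewise $\|\mathcal{S}_{1}f\|_{L^{2}(K)}\le C_{s}\|f\|_{H^{-s}(\Omega)}$ by the smoothing just discussed. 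Adding the three estimates yields the claimed a priori bound, with $C$ depending only on the symbols of $Q$, $N_{\sigma,E,\alpha,ballistic}$ and the kernel of $N_{\sigma,E,\alpha,reflect}$.

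For part (b), if $f\in\mathrm{Ker}\,I_{\sigma,E,\alpha}$ then $N_{\sigma,E,\alpha}f = I_{\sigma,E,\alpha}^{*}I_{\sigma,E,\alpha}f = 0$, so the identity above reduces to $f = -Tf$ with $T := QN_{\sigma,E,\alpha,reflect} + \mathcal{S}_{1}$. The operator $T$ sends $L^{2}(K)$ into $C^{\infty}(K)$ (and in fact into a bounded subset of $H^{s}(K)$ for each $s$), so $f = -Tf$ forces $f\in C^{\infty}(K)$ by a one-line bootstrap. Moreover $T\colon L^{2}(K)\to L^{2}(K)$ is compact by Rellich's theorem, so $\mathrm{Ker}\,I_{\sigma,E,\alpha}\subset\mathrm{Ker}(\mathrm{Id}+T)$, which is finite-dimensional by the Fredholm alternative. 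I expect the only delicate point to be the bookkeeping of the support/wavefront conditions needed so that $\mathcal{S}_{1}$ (which Proposition~\ref{prop:normaldecompositioneuc} only gives as microlocally smoothing on $\mathcal{M}'$) and $QN_{\sigma,E,\alpha,reflect}$ really are smoothing operators on $K$; this is precisely the role of the hypothesis $K\Subset\mathcal{M}$ and of inserting the fixed cutoffs around $K$. Everything else is standard mapping-property and Fredholm-theory bookkeeping.
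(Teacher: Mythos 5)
Your argument is correct and is essentially the proof the paper intends: the paper defers to Proposition 3 of \cite{hubenthal2}, whose proof is precisely this parametrix-plus-Fredholm argument applied to the decomposition $QN_{\sigma,E,\alpha}f = f + QN_{\sigma,E,\alpha,reflect}f + \mathcal{S}_{1}f$ of Proposition \ref{prop:normaldecompositioneuc}. The only delicate point is the one you already flag, namely that $K \Subset \mathcal{M}$ forces $T^{*}K\setminus 0 \subset \mathcal{M}'$ so that the microlocal smoothing of $\mathcal{S}_{1}$ becomes genuine smoothing on $L^{2}(K)$, exactly as in the remark following Proposition \ref{prop:normaldecompositioneuc}.
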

\begin{proof}The proof exactly the same as that for Proposition 3 of \cite{hubenthal2}, except now we apply the new structure result for the normal operator that is given by Proposition \ref{prop:normaldecompositioneuc}.
\end{proof}
We also remark that smoothness of the kernel of the geodesic X-ray transform of 2-tensors on compact simple Riemannian manifolds with boundary is considered closely in \cite{chappa}.

\subsection{Reducing the Smoothness Condition on $\sigma$ \label{sec:reducingsmoothness}}
Given a choice of smooth cutoff $\alpha$ and a smooth $\sigma$ such that $I_{\sigma, E, \alpha}$ is injective, we would like to be able to perturb $\alpha$ and $\sigma$ in $C^{2}$ and still have $\mathcal{N}_{\sigma, E,\alpha}$ be injective on $L^{2}(K)$ for some $K \Subset \Omega$. We do this according to the following modified version of (\cite{xraygeneric}, Proposition 4). The proof is the same as that given for Proposition 4 of \cite{hubenthal2}, so we omit it.
\begin{proposition}Assume that $\sigma, \alpha$ are fixed and belong to $C^{2}$. Let $(\sigma', \alpha')$ be $O(\delta)$ close to $(\sigma, \alpha)$ in $C^{2}$. Then there exists a constant $C > 0$ that depends on an a priori bound on the $C^{2}$ norm of $(\sigma, \alpha)$ such that
\begin{equation}
\left\| (\mathcal{N}_{\sigma', E, \alpha'} - \mathcal{N}_{\sigma, E, \alpha})f \right\|_{H^{1}(\Omega)} \leq C \delta \|f\|_{L^{2}(K)}. 
\end{equation}\label{prop:normalperturb}
\end{proposition}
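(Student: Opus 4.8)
The plan is to exploit the decomposition of Proposition \ref{prop:normaldecompositioneuc} and estimate separately the ballistic and reflection contributions to
\[
N_{\sigma',E,\alpha'} - N_{\sigma,E,\alpha} = \bigl(N_{\sigma',E,\alpha',ballistic} - N_{\sigma,E,\alpha,ballistic}\bigr) + \bigl(N_{\sigma',E,\alpha',reflect} - N_{\sigma,E,\alpha,reflect}\bigr).
\]
The first point is a rigidity observation: for a sufficiently small $C^{2}$ (hence $C^{0}$) perturbation, $\mathrm{supp}(\alpha')$ still lies in the region of regular broken rays reflecting off the same ordered sequences of hyperplanes as those in $\mathrm{supp}(\alpha)$, so we may write $\alpha' = \sum_{k}\alpha'_{k}$ mirroring $\alpha = \sum_{k}\alpha_{k}$, with the same reflection sequences $\{P_{1},\ldots,P_{N_{k}}\}$, the same bounds $N_{k}\le N_{max}$, and in particular the same geometric maps $R_{j},S_{j}$ and their composites $R,S$ from \S\ref{sec:changevar}. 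Consequently the only objects that move are the cutoffs $|\alpha_{k}|^{2}$ and the weights $w_{j}$, and both depend on $(\sigma,\alpha)$ in a Lipschitz fashion with respect to $C^{2}$: since $w_{j}$ is an exponential of an integral of $\sigma$ over line segments whose endpoints depend smoothly on the fixed convex smooth geometry, we get $\|w'_{j}-w_{j}\|_{C^{2}}\le C\delta$ and $\|\alpha'_{k}-\alpha_{k}\|_{C^{2}}\le C\delta$, whence $\||\alpha'_{k}|^{2}-|\alpha_{k}|^{2}\|_{C^{2}}\le C\delta$, with $C$ depending only on an a priori $C^{2}$ bound for $(\sigma,\alpha)$.

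For the reflection part, recall from the proof of Proposition \ref{prop:normaldecompositioneuc} that each term $I_{k,j_{1},j_{2}}$ is an integral operator whose Schwartz kernel $\mathcal{K}_{k,j_{1},j_{2}}(x,y)$ is a product of $|\alpha_{k}|^{2}$, $w_{j_{1}}$ and $w_{j_{2}}$ evaluated at arguments depending smoothly on $(x,y)$, times the factor $|y-R(x)|^{-n+1}$, which by Lemma \ref{lemma:reflectoutsidebound} is smooth and, together with its first derivatives, uniformly bounded on the relevant region of $\Omega\times\Omega$. Since $R$ is unchanged, the kernel differences $\mathcal{K}'_{k,j_{1},j_{2}}-\mathcal{K}_{k,j_{1},j_{2}}$ are $O(\delta)$ in $C^{1}(\Omega\times\Omega)$ with support in a fixed compact subset of $\Omega\times\Omega$; hence (a Schur test applied to these kernels and to their $x$-derivatives, both uniformly bounded with fixed compact support) each corresponding operator difference is bounded $L^{2}(K)\to H^{1}(\Omega)$ with norm $O(\delta)$. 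Summing the finitely many terms yields $\|(N_{\sigma',E,\alpha',reflect} - N_{\sigma,E,\alpha,reflect})f\|_{H^{1}(\Omega)}\le C\delta\|f\|_{L^{2}(K)}$.

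For the ballistic part I would invoke Lemma 2 of \cite{xraygeneric}, by which $N_{\sigma,E,\alpha,ballistic}$ is a classical pseudo differential operator of order $-1$ whose full symbol is produced from the even amplitude $[\,|\alpha_{k}(z_{-j},\theta_{-j})|^{2}\,w_{j}(x,\cdot)\,w_{j}(x+t\theta,\cdot)\,]_{even}$ by an explicit construction carrying $C^{2}$ amplitudes continuously into any prescribed finite collection of symbol seminorms of order $-1$. Since the amplitude moves by $O(\delta)$ in $C^{2}$, the difference $N_{\sigma',E,\alpha',ballistic}-N_{\sigma,E,\alpha,ballistic}$ is a pseudo differential operator of order $-1$ whose symbol is $O(\delta)$ in the finitely many seminorms controlling $L^{2}\to H^{1}$ continuity, so that for $f$ supported in $K\Subset\Omega$ we obtain $\|(N_{\sigma',E,\alpha',ballistic}-N_{\sigma,E,\alpha,ballistic})f\|_{H^{1}(\Omega)}\le C\delta\|f\|_{L^{2}(K)}$. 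Adding the two estimates proves the proposition.

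I expect the ballistic estimate to be the delicate step, since $\sigma$ is only $C^{2}$ and the symbol of $N_{ballistic}$ is correspondingly of finite smoothness; the key is that an operator of order $-1$ is already $L^{2}\to H^{1}$ bounded by finitely many symbol seminorms, each of which is a Lipschitz function of $(\sigma,\alpha)\in C^{2}$, so no smooth dependence is needed. Everything else is bookkeeping over the finite sums in $k,j_{1},j_{2}$, together with the observation that the geometry — the reflection sequences and the maps $R,S$ — is frozen under a sufficiently small $C^{2}$ perturbation. This reproduces, in the present flat-reflector setting, the argument used for Proposition 4 of \cite{hubenthal2}.
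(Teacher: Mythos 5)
Your argument is correct and is essentially the intended one: the paper itself omits the proof, stating only that it is the same as Proposition 4 of \cite{hubenthal2} (which in turn adapts Proposition 4 of \cite{xraygeneric}), and that argument is precisely the ballistic/reflect splitting with Lipschitz dependence of the weights and cutoffs on $(\sigma,\alpha)$ in $C^{2}$, symbol-seminorm control for the order $-1$ ballistic part, and kernel estimates for the reflect part that you spell out. The only point worth making explicit is that the ``frozen geometry'' claim for $\mathrm{supp}(\alpha')$ is really an implicit hypothesis of Theorem \ref{thm:stabilityestimate}(b) (that $\alpha'$ is again an admissible cutoff decomposing as $\sum_{k}\alpha'_{k}$ over the same reflection sequences) rather than a consequence of $C^{0}$ closeness alone.
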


Using Proposition \ref{prop:normalperturb} we now have all the pieces required to prove the stability estimate of Theorem \ref{thm:stabilityestimate} stated in \S \ref{sec:mainresults}. 
\begin{proof}[Proof of Theorem \ref{thm:stabilityestimate}] The proof is essentially the same as that for Theorem 2 of \cite{hubenthal2} combined with our more general structure result for $\mathcal{N}_{\sigma, E, \alpha}$ in $n$-dimensional Euclidean domains given by Proposition \ref{prop:normaldecompositioneuc}.
\end{proof}

\section{Conclusion\label{sec:conclusion}}
Altogether, this work provides a generalization to higher dimensions of the results in \cite{hubenthal2}. The essential ingredient was a change of variables in order to simplify $\mathcal{N}_{\sigma, E, \alpha, reflect}$, which is the more nontrivial part of the normal operator. The flatness condition on the reflecting parts of the boundary ensures that there are a countable number of unique sequences of reflecting faces for a given broken ray. If we further impose a limit on the total number of reflections, then there are only finitely many ways in which a broken ray can reflect. The main advantage one has in higher dimensions is that there are far more possible broken rays which can detect a given microlocal singularity. As such, injectivity is easier to demonstrate for $n \geq 3$ for more general choices of $E$. Finally, it is the author's opinion that the technique used in \S \ref{sec:changevar} can be generalized to the case of Riemannian manifolds where the reflecting part of the boundary is flat. This would be an interesting direction for future work.

\section*{Acknowledgments} Support by the Institut Mittag-Leffler (Djursholm, Sweden) is gratefully acknowledged, as many of the ideas in the work were conceived there. This research was also conducted with partial support from the Academy of Finland while at the University of Jyv\"askyl\"a. The author would like to thank Jan Boman for helpful advice in relation to this problem as well as Mikko Salo for providing helpful discussion and feedback. Finally, much thanks is due to the referees for providing helpful comments and suggestions.

\nocite{*}
\bibliographystyle{plain}
\bibliography{references}

\end{document}